\theoremstyle{plain}
\newtheorem{THEOREM}{Theorem}[section]
\newtheorem{corollary}[THEOREM]{Corollary}
\newtheorem{lemma}[THEOREM]{Lemma}
\newtheorem{proposition}[THEOREM]{Proposition}
\theoremstyle{definition}
\newtheorem{definition}[THEOREM]{Definition}
\theoremstyle{remark}
\newcommand{\lem}[1]{Lemma~\ref{#1}}
\newcommand{\prop}[1]{Proposition~\ref{#1}}
\def \a {\alpha}
\def \b {\beta}
\def \d {\delta}
\def \z {\zeta}
\def \g {\gamma}
\def \e {\varepsilon}
\def \f {\varphi}
\def \l {\lambda}
\def \n {\nabla}
\def \s {\sigma}
\def \th {\theta}
\def \w {\omega}
\def \bs {\boldsymbol{\sigma}}
\def \bu {{\bf u}}
\def \bv {{\bf v}}
\def \btau {\boldsymbol{\tau}}
\def \bnu {\boldsymbol{ \nu}}
\def \cD {\mathcal{D}}
\newcommand{\N}{\ensuremath{\mathbb{N}}}   
\newcommand{\R}{\ensuremath{\mathbb{R}}}   
\newcommand{\T}{\ensuremath{\mathbb{T}}}   
\def \lan {\langle}
\def \ran {\rangle}
\def \p {\partial}
\def \ra {\rightarrow}
\def \ss {\subset}
\def \bs {\backslash}
\newcommand{\rest}[2]{#1\raisebox{-0.3ex}{\mbox{$\mid_{#2}$}}}
\DeclareMathOperator{\loc}{loc} %
\DeclareMathOperator{\sign}{sgn} %
\begin{document}

\title{2D homogeneous solutions to the Euler equation}
\author{Xue Luo}
\address{School of Mathematics and Systems Science, Beihang University, 37 Xueyuan Road, Haidian District, Beijing, P. R. China. 100191}
\email{xluo@buaa.edu.cn}

\author{Roman Shvydkoy}
\address{Department of Mathematics, Statistics, and Computer Science
University of Illinois at Chicago
322 Science and Engineering Offices (M/C 249)
851 S. Morgan Street
Chicago, IL 60607-7045}
\email{shvydkoy@uic.edu}

\thanks{X.L. acknowledges the support of the Grant No. YWF-14-RSC=026 from Beihang University. The work of R.S. is partially supported by NSF grant DMS-1210896. A part of this project was completed during R.S.'s visit at the Institute for Mathematics and its Applications, Minnesota. He thanks the Institute for hospitality and Vladimir Sverak for stimulating conversations.}

\date{\today}

\begin{abstract} In this paper we study classification of homogeneous solutions to the stationary Euler equation with locally finite energy. Written in the form  $\bu = \n^\perp \Psi$, $\Psi(r,\th) = r^{\l} \psi(\th)$, for $\l >0$, we show that only trivial solutions exist in the range $0<\l<1/2$, i.e. parallel shear and rotational flows. In other cases many new solutions are exhibited that have hyperbolic, parabolic and elliptic structure of streamlines. In particular, for $\l>9/2$ the number of different non-trivial elliptic solutions is equal to the cardinality of the set $(2,\sqrt{2\l}) \cap \N$. The case $\l = 2/3$ is relevant to Onsager's conjecture. We underline the reasons why no anomalous dissipation of energy occurs for such solutions despite their critical Besov regularity $1/3$.
\end{abstract}

\maketitle


\section{Introduction} The Euler equations of motion of an ideal incompressible fluid are given by
\begin{equation}\label{see}
\begin{split}
\p_t \bu+ \bu \cdot \n \bu + \n p & = 0 \\
 \n \cdot \bu & = 0.
 \end{split}
 \end{equation}
The system in the open space has a two parameter family of scaling symmetries: 
\[
(\bu,p) \ra (\l^\a \bu(\l^{\a+\b}t, \l^\b x), \l^{2\a} p(\l^{\a+\b}t, \l^\b x))
\]
which allow to look for scaling invariant, or self-similar, solutions of the form
\begin{equation}\label{ansatz}
\begin{split}
\bu(x,t) & = \frac{1}{(T - t)^{\frac{\a}{1+\a}}} \bv  \left( \frac{x-x^*}{(T - t)^{\frac{1}{1+\a}}}\right) \\
p(x,t) & =  \frac{1}{(T - t)^{\frac{2\a}{1+\a}}} q\left( \frac{x-x^*}{(T - t)^{\frac{1}{1+\a}}}\right) + c(t),
\end{split}
\end{equation}
where $\a >-1$ and $\bv$, $q$ are profiles of velocity and pressure, respectively, in self-similar variables. Recently, such solutions were excluded  under various decay assumptions on $\bv$ at infinity, provided $\bv$ and $q$ are locally smooth functions, see \cite{ChShv,BrShv} and references therein. An example of such solution would demonstrate blow-up which is a major open question in 3D. Examples of non-locally smooth solutions in the form of \eqref{ansatz} are abundant, but all of them are stationary, hence homogeneous, e.g. $\bv(y) = \frac{y^\perp}{|y|^{\a+1}}$ in 2D. 
In this paper we make an attempt to give a systematic classification of such two-dimensional homogeneous solutions. The question has been previously raised in a just few sources  -- in relation to Lie-symmetries in \cite{Olver} and within a general collection of special solutions in \cite{PZ}. However, no description was provided for those. More recently, homogeneous solutions of degree $-1/3$ were highlighted as candidates for possessing anomalous energy dissipation as such solutions fall precisely into the so-called Onsager critical local class $B^{1/3}_{3,\infty}$.  It was shown in \cite{shv-lectures} that the energy flux in fact vanishes for such solutions, however the underlying reason for that was missing. These findings, motivated us to look into the question of classification in great detail. 

We restrict ourselves to solutions with locally finite energy only. We write
\begin{equation}\label{homo}
\bu(r,\th) = r^q [ u_1(\th) \btau + u_2(\th) \bnu ], \quad q > -1.
\end{equation}
Here $(r,\th)$ are polar coordinates, $\btau = \lan -\sin \th, \cos \th \ran$ and $\bnu = \lan \cos \th, \sin \th \ran$ are the vectors of the standard local basis.  
The case $q=-1$ is important as well for it includes the classical point vortex. However in that case we can show that all such solutions are of the form
\[
\bu = \frac{1}{r} (A \btau + B \bnu), \quad A,B \in \R.
\]
So, we  only focus on the case $q> - 1$. In that case one can find a global stream-function $\bu = \n^\perp \Psi $, where $\Psi(r,\th) = r^{\l} \psi(\th)$ with $\psi(\th) = u_1/ \l$, $\l = q+1$. Furthermore, the pressure $p(r,\th) = r^{2q}P(\th)$ has no dependence on $\th$: $P(\th) = P$. Plugging $\bu$ and $p$ into the Euler system one reads off a second order ODE (see \eqref{ODE}) for $\psi$. We thus are looking for $2\pi$-periodic solutions with $\psi \in H^1(\T)$. The ODE has a Hamiltonian structure with pressure $P$ being the Hamiltonian. Solutions to nonlinear Hamiltonian systems and the issues related to the periods of solutions is a classical subject, see \cite{chicone,cima} and references therein. They cannot always be found explicitly however in our case their streamline geometry can be described qualitatively. To get a glimpse on the typical  structure of solutions, let us consider the case $\l =2$. It turns out in this case we can characterize all solutions to be of the form (up to symmetries, see below): $ \psi = \g_1 + \g_2 \cos(2\th)$, $P = 2(\g_1^2 - \g_2^2)$, $\Psi = (\g_1+\g_2)x^2+ (\g_1-\g_2)y^2$. We can see that elliptic, parallel shear, and hyperbolic configuration of streamlines are determined by $P>0$, $P=0$, $P<0$, respectively. Another example is given by $\l = \frac12$,  $\psi = \sqrt{\g_1 + \g_2 \cos(\th)}$, $P =-\frac{\g_1}{4}$, $\Psi = \sqrt{\g_1 r + \g_2 x}$. Here transition from elliptic, $|\g_2| < \g_1$, to hyperbolic, $|\g_2| > \g_1$, cases is given by a truly parabolic solution, when $|\g_2| = \g_1$, and it is not determined by the sign of $P$, rather by the sign of another conserved quantity, the Bernoulli function $B = (2P+\l^2 \psi^2+(\psi')^2)\psi^{\frac{2}{\l}-2}$. Note that here the hyperbolic solutions are not in $H^1$, which along with the requirement of  life-period $2\pi$, is another major obstacle for existence of solutions for various values of $\l$, $P$, and $B$. In Section~\ref{s:summary} we give detailed summary of all cases where we can construct such solutions, which we organize into separate tables  for each type of solutions. Here we give a few highlights of the obtained results.

First, there is no trivial solutions for $0<\l<\frac12$. By a trivial solution we understand flows that exist regardless of $\l$: rotational flow $\psi = const$, corresponding to extreme points of the pressure Hamiltonian $P$, and paraller shear flow $\psi = | \cos(\th)|^\l$, corresponding to $P=0$, a separatrix between elliptic and hyperbolic regions. Cases $\l = \frac12, 2$ are completely classified by the examples above. For $\frac12 <\l <1$ infinitely many hyperbolic solutions can be constructed for $P>0$, but no such solutions exist for $P<0$. There are no non-trivial elliptic solutions for $\frac12 <\l \leq \frac34$. Elliptic case in the range $\frac34< \l <1$ remains uncertain, and we will comment on it later. For $\l=1$ all solutions are parallel shear flows. The range $\l>1$ is in a sense conjugate to $0<\l<1$. In that range infinitely many hyperbolic solutions can be constructed for $P<0$, but no such solutions exist for $P>0$. For elliptic solutions the range $1<\l<\frac43$ is uncertain too, and no such solutions exist for $\l \in [4/3, 2) \cup (2, 9/2]$ (the case $\l=2$ being exceptional, see example above). Interestingly, non-trivial elliptic solutions emerge as $\l$ crosses beyond $9/2$. Their number is equal precisely to the number of integers in the interval $(2,\sqrt{2\l})$. 

The paper is organized as follows. In Section~\ref{s:general} we discuss  structure of the ODE for $\psi$ to satisfy, its weak formulation and relation to the Euler equation. We give a general description of solutions, where the main observation is that (local) solutions that vanish at two points can be glued together to form new solutions and that new solutions belong to $H^1$ automatically as long as each piece does. We also give full classifications for $\l = 0, \frac12, 1, 2$. We explain the Hamiltonian structure of the ODE and conjugacy of cases $0<\l<1$ and $1<\l$. Then in Section~\ref{s:desc} we focus on $\l>1$, and examine the period function $T$ for solutions of the system. We employ the results of Chicone \cite{chicone} and Cima, et al \cite{cima} to prove monotonicity of $T$ and finding the ranges of $T$ in the elliptic regions. The results for $0<\l<1$ follow by conjugacy, and in Section~\ref{s:summary} we collect the classification tables together. We also come back to the case $\l = 2/3$ and elaborate on its relation to Onsager's conjecture. 

In the elliptic case of $\l \in (3/4,1) \cup (1,4/3)$ the above method of Chicone et al fails, i.e. the preconditions for monotonicity are not satisfied in our case. However based on our numerical evidence we believe that the periods $T$ are still monotone, and thus elliptic solutions do not exist in that range either.

\section{General considerations and some special cases}\label{s:general}

\subsection{Euler equation in polar coordinates}
We consider homogeneous solutions to the Euler equation of the form
\begin{equation}\label{homo}
\bu(r,\th) = r^q [ u_1(\th) \btau + u_2(\th) \bnu ], \quad q \geq -1.
\end{equation}
Here $(r,\th)$ are polar coordinates, $\btau = \lan -\sin \th, \cos \th \ran$ and $\bnu = \lan \cos \th, \sin \th \ran$ are the vectors of the standard local basis.  Notice the formulas: $\p_\th \btau = -\bnu$, $\p_\th \bnu = \btau$. The divergence in polar coordinates is given by $\n \cdot \bu = \p_r \bu \cdot \bnu + \frac{1}{r} \p_\th \bu \cdot \btau$. Thus, the incompressibility condition takes the form
\begin{equation}\label{div-free}
(q+1) u_2 + u_1'  = 0.
\end{equation}
To write the Euler equation in polar notation, first we write the Jacobi matrix of $\bu$ as $\n \bu = \p_r \bu \otimes \bnu + \frac{1}{r} \p_\th \bu \otimes \btau$. In view of \eqref{homo}, \eqref{div-free} we obtain
 \[
 \n \bu = r^{q-1} \left[ - q u_2 \btau \otimes \btau + q u_1 \btau \otimes \bnu + (u_2'-u_1) \bnu \otimes \btau + q u_2 \bnu \otimes \bnu \right].
 \]
 The nonlinear term becomes 
 \[
 \bu \cdot \n \bu = r^{2q-1} ( u_1u_2' - u_1^2+q u_2^2 ) \bnu.
 \]
 Since there is no tangential part, the natural pressure ansatz must be $p = r^{2q} P$, there $P$ is constant. We thus obtain
 \begin{equation}\label{ee-polar}
 u_1u_2' - u_1^2+q u_2^2 + 2q P = 0.
\end{equation}
 
\subsection{Case $q  = -1$}
This case corresponds to the classical point vortex solutions, and in fact it is easy to find all other $(-1)$-homogeneous solutions. The incompressibility condition \eqref{div-free} forces $u_1$ to be constant. Then \eqref{ee-polar} reads
\[
u_1 u_2' = u_2^2 + u_1^2 + 2P.
\]
If $u_1 = 0$, then $u_2$ is constant. If $u_1 \neq 0$, then the above Riccati equation on $u_2$ has no smooth $2\pi$-periodic solutions, except for the trivial one $u_2 = const$. 
We conclude that all $(-1)$-homogeneous solutions are of the form
\[
\bu = \frac{1}{r} (A \btau + B \bnu), \quad A,B \in \R.
\]

\subsection{Stream-function} From now on we will be concerned with the case $q > -1$ (note that $q<-1$ yields solution with locally infinite energy, thus of lesser interest). We will find that using $\l = q+1$ as a homogeneity parameter, which corresponds to homogeneity of the steam-function if it exists, greatly simplifies the notation. Thus, under the standing assumption, $\l > 0$. Despite the fact that $\R^2\bs \{0\}$ is not simply connected, all $(\l-1)$-homogeneous solutions in fact possess a stream-function. Indeed, using the divergence-free condition $u_1' + \l u_2 = 0$, and setting $\Psi(r,\th) = r^{\l} \psi(\th)$ with $\psi(\th) = u_1/ \l $ we obtain 
$$
\bu = \n^\perp \Psi =  r^{\l-1}\left[   \l \psi \btau - \psi' \bnu  \right].
$$
The equation \eqref{ee-polar}
then reads
\begin{equation}\label{ODE}
\begin{split}
2 (\l-1) P & =- (\l-1) (\psi')^2 +\l^2 \psi^2 + \l \psi'' \psi,\\
\psi(0) & = \psi(2\pi).
\end{split}
\end{equation}
For locally finite energy solutions, $u\in L^2_{\loc}(\R^2)$, which are of our primary concern, we have $\psi \in H^1(\T)$. The equation  \eqref{ODE} under such low regularity  can be understood in the distributional sense. That will be detailed in \lem{l:ODEdist-q} below. Let us first make some formal observations. There are three obvious symmetries of the equation:
\begin{itemize}
\item[(i)] Rotation: $\psi \ra \psi(\cdot - \th_0)$.
\item[(ii)] Scaling: $(P,\psi) \ra (\a^2 P,\a\psi)$.
\item[(iii)] Reflection: $\psi \ra \psi(-\th)$.
\end{itemize} 
Let us list some explicit solutions to \eqref{ODE} up to the symmetries noted above. Those that have singularities or exist only on part of the circle should at this point be viewed as local solutions on their intervals of regularity.
\begin{align}
\psi& \equiv \frac{\sqrt{2(\l-1)P}}{\l}, (\l-1)P>0 & &\text{rotational flow  } \Psi =  \psi r^\l, \label{ex:rot}\\
\psi& = A |\cos(\th)|^\l, P=0 & &\text{parallel shear flow  } \Psi(x,y) =  A|x|^\l, \label{ex:psf}\\ 
\psi& = \g_1 + \g_2 \cos(2\th), \l = 2, P = 2(\g_1^2 - \g_2^2) & &\Psi = (\g_1+\g_2)x^2+ (\g_1-\g_2)y^2, \label{ex:2}\\
\psi &= \sqrt{\g_1 + \g_2 \cos(\th)}, |\g_2| \leq \g_1, \l = \frac12, P =-\frac{\g_1}{4}  & & \Psi = \sqrt{\g_1 r + \g_2 x},  \label{ex:half}\\
\psi & = \frac{\sqrt{-2P}}{\l}\cos(\l \th ), P<0, \l\geq \frac12.  & &\label{ex:ho}
\end{align}
Observe that in example \eqref{ex:2} elliptic, parallel, and hyperbolic configurations of stream-lines are determined by $P>0$, $P=0$, $P<0$, respectively. In example \eqref{ex:half} the case $|\g_2|<\g_1$ produces non-vanishing $\psi$ with elliptic steam-lines, case $|\g_2|=\g_1$ yields parabolic steam-lines, and  the case $|\g_2|>\g_1$ yields hyperbolic steam-lines in the segments $\{ -\g_2 \cos \th  < \g_1 \}$, although these solutions don't belong to $H^1$, therefore we don't list them.  Example \eqref{ex:ho} is $2\pi$-periodic only for $\l \in \frac12 \N$, although it is formally a solution to \eqref{ODE}. We will see that all values $\l\geq \frac12$ are relevant, as we will be able to piece together local solutions where they vanish to reconstruct complete solutions. For $\l>\frac12$ solutions have hyperbolic streamlines, case $\l = \frac12$ is a subcase of \eqref{ex:half} and is parabolic. For $\l<\frac12$ the same solution \eqref{ex:ho} is sign-definite on a period longer than $2\pi$, therefore cannot be used. 

Generally, elliptic-type solutions correspond to non-vanishing streamfunction $\psi$, hyperbolic-type is described by $\psi$ vanishing at two or more points unless it is a parallel shear flow, and parabolic-type corresponds to vanishing at one point only.

Occurrence of singular solutions makes it necessary to clarify the relationship between \eqref{ODE} and the original Euler equations in the weak settings.

\begin{lemma}\label{l:ODEdist-q} Let $I \ss \T$ be an open interval. Suppose $\psi \in H^1(I)$. Set $\bu =  r^{\l-1}\left[   \l \psi \btau - \psi' \bnu  \right]$ and $p = r^{2\l -2}P$, where $P \in \cD(\T)$. Then
\begin{equation}\label{pq}
\bu \cdot \n \bu + \n p = 0
\end{equation}
holds in the weak sense in the sector $\Sigma = \{ \th \in I, 0<r < \infty \}$, which means that  for all $\bv \in C^\infty_0(\Sigma)$ one has
\begin{equation}\label{ee-weak}
\int_\Sigma \bu \cdot \n \bv \cdot \bu dx + \int_0^\infty P( \n \cdot \bv(r,\cdot)) r^{2\l - 1} dr = 0,
\end{equation}
if and only if $P$ is a constant and the identity
\begin{equation}\label{ODEdist-q}
2(\l-1) P = - (2\l-1)(\psi')^2 + \l^2 \psi^2 - \l \psi \psi' \p_\th 
\end{equation}
holds in the distributional sense on $I$. Consequently, \eqref{ODEdist-q} holds on $\T$ if and only if \eqref{ee-weak} holds on $\R^2\bs \{0\}$. If $\l >  \frac12$, then \eqref{ee-weak} holds on the whole space $\R^2$.
\end{lemma}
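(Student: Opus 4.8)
The plan is to substitute the explicit forms of $\bu$ and $p$ into the weak identity \eqref{ee-weak} and reduce it, by integrating out the radial variable, to a one-dimensional distributional identity in $\th$ that must hold against every angular test function. First I would expand an arbitrary test field in the moving frame as $\bv = a\,\btau + b\,\bnu$ with $a,b \in C_0^\infty(\Sig)$, and compute $\n\bv = \p_r\bv\otimes\bnu + \frac1r\p_\th\bv\otimes\btau$ using $\p_\th\btau = -\bnu$, $\p_\th\bnu = \btau$; note that $\p_\th\bv = (\p_\th a + b)\btau + (\p_\th b - a)\bnu$. Since $\bu = r^{\l-1}(\l\psi\,\btau - \psi'\,\bnu)$, the contraction $\bu\cdot\n\bv\cdot\bu$ becomes a finite sum of terms $r^{2\l-2}$ (times the $\p_r$-derivatives of $a,b$) plus $r^{2\l-3}$ (times $a,b$ and their $\th$-derivatives), with coefficients built from $\l^2\psi^2$, $(\psi')^2$ and $\psi\psi'$. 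Against the area element $r\,dr\,d\th$ each term carries a weight $r^{2\l-1}$ or $r^{2\l-2}$.

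Next I would perform the $r$-integration. Terms containing $\p_r a$ or $\p_r b$ are integrated by parts in $r$; the boundary contributions vanish because $a,b$ are supported away from $r=0$ and $r=\infty$, and $\p_r$ hitting $r^{2\l-1}$ simply produces the factor $(2\l-1)r^{2\l-2}$. Writing $A(\th) = \int_0^\infty r^{2\l-2}a\,dr$ and $B(\th) = \int_0^\infty r^{2\l-2}b\,dr$, which range over all of $C_0^\infty(I)$ independently as the radial profile varies, the whole identity collapses to a pairing over $I$ against $A,B$ and their derivatives. A key algebraic point occurs in the $A$ (tangential) terms of the nonlinear contribution: they read $\int_I(2\l^2\psi\psi'\,A + \l^2\psi^2\,A')\,d\th$, and since $\psi\in H^1(I)$ gives $\psi^2\in W^{1,1}$ with $(\psi^2)' = 2\psi\psi'$, integrating $\l^2\psi^2 A'$ by parts cancels these exactly. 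This reflects the fact that $\bu\cdot\n\bu$ is purely radial, so the tangential part of the equation comes solely from the pressure, whose term contributes $-\lan P', A\ran$; requiring \eqref{ee-weak} for all $A$ thus forces $P' = 0$, i.e. $P$ constant. The remaining $B$ (normal) terms reproduce the right-hand side of \eqref{ODEdist-q}; crucially, the expression $-\l\psi\psi'\p_\th$ appears here because the calculation naturally leaves the derivative on the test function, encoding $\l(\psi\psi')'$ in the only form admissible when $\psi''$ need not exist.

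The converse is obtained by running the computation backwards: if $P$ is constant and \eqref{ODEdist-q} holds as distributions on $I$, the two angular pairings reassemble into \eqref{ee-weak} for every $\bv = a\,\btau + b\,\bnu$, which exhausts $C_0^\infty(\Sig)$. To upgrade from $\R^2 \bs \{0\}$ to all of $\R^2$ when $\l > \frac12$, I would fix $\bv \in C_0^\infty(\R^2)$ and test against the truncated fields $\chi(r/\e)\,\bv$, where $\chi$ vanishes near $0$ and equals $1$ away from it. The discrepancy from \eqref{ee-weak} involves $\n\chi(r/\e)$, of size $\e^{-1}$ on the annulus $r\sim\e$ of area $\sim\e^2$; since $|\bu|^2$ and $|p|$ are of order $r^{2\l-2}$ there, the error is controlled by $\e^{-1}\cdot\e^2\cdot\e^{2\l-2} = \e^{2\l-1} \to 0$ precisely when $\l > \frac12$, so no spurious contribution concentrates at the origin.

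The main obstacle is the bookkeeping in the frame computation: one must correctly expand $\bu\cdot\n\bv\cdot\bu$, track every power of $r$ through the radial integration by parts, observe the cancellation of the tangential nonlinear terms, and — most importantly — resist distributing the derivative off the product $\psi\psi'$, since only $\psi\psi' \in L^1(I)$ (not $\psi''$) is meaningful under the $H^1$ hypothesis. Justifying the interchange of the distributional pairing in $\th$ with the radial integral, and checking that the surviving angular identity is genuinely equivalent to \eqref{ODEdist-q} rather than a weaker consequence, are the steps demanding the most care.
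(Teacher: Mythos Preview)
Your proposal is correct and follows essentially the same approach as the paper: decompose the test field in the moving frame, compute the nonlinear and pressure contributions in polar coordinates, observe the cancellation of the tangential ($A$-) terms that forces $P$ constant, and read off \eqref{ODEdist-q} from the normal ($B$-) terms; the cutoff argument for $\l>\tfrac12$ is likewise identical. The only cosmetic difference is that the paper works directly with separated test functions $f(r)[A(\th)\btau + B(\th)\bnu]$ rather than general $a(r,\th),\,b(r,\th)$ followed by radial integration, which slightly streamlines the bookkeeping but is mathematically equivalent to your route.
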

\begin{proof}  Suppose  \eqref{ee-weak} holds. Plugging $\bv =f(r)[ A(\th)\btau +B (\th)\bnu]$ with $f\in C^\infty_0(0,\infty)$, $A,B \in C^\infty_0(I)$ one can see that the $r$-integrals separate and cancel from the equation. The $\th$-integrals give
\begin{equation}\label{}
\begin{split}
&\int_\T [ \l^2 \psi^2 A' + \l \psi' \psi A+(2\l-1)\l \psi'\psi A ] \, d\th \\
&+ \int_\T[ \l^2 \psi^2 B - \l \psi'\psi B' -(2\l-1)(\psi')^2 B]\, d\th \\
&= -2(\l-1) P(B) + P(A').
\end{split}
\end{equation}
The integral with $A$ on the left hand side vanishes. Plugging $B =0$ shows that $P(A')=0$ for any $A$. Consequently, $P$ is a constant function. Reading off the terms involving $B$ leads to \eqref{ODEdist-q}. The converse statement is routine. Finally, the last statement is verified by 
taking $\bv \s_\e$, where $\s_\e(r) = \s(r/\e)$, $\s = 1$ for $r>1$, $\s =0$, $r<1/2$, and passing to the limit as $\e \ra 0$.
\end{proof}

\subsection{Life-times, general structure, special cases} Let us suppose that for some open interval $I = (a,b)$, $\psi\in H^1(I)$ (note that $\psi$ is automatically continuous), $\psi$ solves \eqref{ODEdist-q} and is sign-definite on $I$. Then 
\[
\psi' \p_\th = \frac{- 2(\l-1) P - (2\l-1)(\psi')^2 + \l^2 \psi^2 }{-\l \psi},
\]
and hence $\psi'' \in L^1(I')$ for any compactly embedded subinterval $I' \ss I$. By bootstraping on the regularity, we conclude that $\psi \in C^\infty$ inside $I$.   This is of course the standard elliptic regularity conclusion. So, $\psi$ can only lose smoothness where it vanishes, otherwise it satisfies \eqref{ODE} classically. To fix the terminology, if  $|\psi| \neq 0$ on the interval $(a,b)$, and $\psi(a) = \psi(b)=0$, then $T = b-a$ will be called the \emph{life-span} of $\psi$ and $(a,b)$ its \emph{life-time}. By the standard uniqueness for ODEs, if life-times of two solutions $\psi_1$ and $\psi_2$ overlap, and $\psi_1(\th_0) = \psi_2(\th_0)$, $\psi'_1(\th_0) = \psi'_2(\th_0)$ at some point $\th_0$, then $\psi_1 \equiv \psi_2$ and their life-times coincide. There are a few immediate consequences of this uniqueness, and in some cases it allows us to give a complete description of solutions to \eqref{ODE}. We discuss it next.

Let us fix solution $\psi\in H^1(I)$ with $I$ being a life time of $\psi$. Suppose $I$ is not the whole $\T$. Then $\psi$ vanishes at its end-points, hence there exists $c\in I$ where $\psi'(c) = 0$. By the reflection and rotation symmetries, $\psi_1(\th) = \psi(c-\th)$ and $\psi_2(\th) = \psi(c+\th)$ are two solutions of the same equation \eqref{ODE} on an open neighborhood of $0$ with the same initial data $(\psi_1,\psi_1')=(\psi_2,\psi_2')$ at $\th = 0$. By uniqueness, it follows that $\psi$ is symmetric with respect to $c$, and $c$ is the middle point of $I$. This also proves that $c$ is the only critical point of $\psi$ on $I$ (for otherwise $I$ covers the entire circle). Thus, any vanishing solution to \eqref{ODE} is an arch-shaped function. The streamlines of the velocity field in this case go off to infinity at the edges of the corresponding sector.  
\begin{definition} A pair $(\psi, I)$, where $\psi \in H^1(I)$ solves  \eqref{ODE} on $I$ and $I$ is a life-time of $\psi$ will be called a local solution of \eqref{ODE}.
\end{definition}
In some cases we can give a complete and explicit description of local and even global solutions.

\begin{lemma}\label{l:psf} The following statements are true.
\begin{itemize}
\item[(a)] For $\l=1$ or $P=0$ any solution is a parallel shear flow with stream-function given by  (up to symmetries)
\begin{equation}\label{sf:psf}
\Psi = |x|^\l(A  \chi_{\{x>0\}} + B \chi_{\{x\leq 0\}}),\, A,B \in \R.
\end{equation}
\item[(b)] For $\l=2$ all local solutions $(\psi,I)$ are given by \eqref{ex:2} up to rotation. In this case life-spans may range from $0$ to $2\pi$. 

\item[(c)] For $\l = \frac12$ all local solutions $(\psi,I)$ are given by \eqref{ex:half}. So, only elliptic and parabolic solutions are possible with life-spans equal $2\pi$.
\end{itemize}
\end{lemma}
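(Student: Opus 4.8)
The device I would build the whole proof around is a single substitution that linearizes \eqref{ODE} in all three cases simultaneously. On a life-time, after using the symmetry $\psi \mapsto -\psi$ (which manifestly preserves \eqref{ODE}) to arrange $\psi > 0$, the solution is smooth and strictly positive by the interior regularity already noted, so $\phi := \psi^{1/\lambda}$ is a well-defined smooth positive function there. A direct computation (the $(\phi')^2$ terms cancel) reduces \eqref{ODE} to
\[
\phi'' + \phi = \frac{2(\lambda-1)P}{\lambda^2}\,\phi^{\,1-2\lambda}.
\]
The plan hinges on the observation that for exactly the three values treated here the right-hand side degenerates to something elementary: it vanishes identically when $\lambda = 1$ or $P=0$ (part (a)); it becomes the constant $-4P$ when $\lambda = \tfrac12$, since then $1-2\lambda = 0$ (part (c)); and it becomes the Ermakov--Pinney term $\tfrac{P}{2}\phi^{-3}$ when $\lambda = 2$, since then $1-2\lambda = -3$ (part (b)).

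For parts (a) and (c) the reduced equation is linear, so I would simply read off its general solution. In case (a), $\phi''+\phi=0$ forces $\phi = m\cos(\theta-c)$, hence $\psi = m^{\lambda}|\cos(\theta-c)|^{\lambda}$, which is precisely the parallel shear profile \eqref{ex:psf} and has life-span $\pi$; since $\phi$ must vanish at the endpoints, a non-vanishing (elliptic) global solution is impossible, so any global solution is a gluing of two such arches, automatically in $H^1(\T)$ by the gluing principle established earlier. Passing to Cartesian coordinates (rotate so $c=0$) turns the two arches into $A|x|^\lambda$ on $\{x>0\}$ and $B|x|^\lambda$ on $\{x<0\}$ with independent constants, i.e. \eqref{sf:psf}. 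In case (c), $\phi''+\phi=-4P$ has general solution $\phi = -4P + \gamma_2\cos(\theta-\theta_0)$; setting $\gamma_1 = -4P$ and rotating recovers $\psi = \sqrt{\gamma_1 + \gamma_2\cos\theta}$, i.e. \eqref{ex:half}. Reality of $\psi$ (equivalently $\phi \geq 0$) forces $|\gamma_2|\leq \gamma_1$; if $|\gamma_2|>\gamma_1$ then $\phi$ changes sign, producing simple zeros near which $(\psi')^2 \sim \mathrm{dist}^{-1}$ is non-integrable, so those profiles are not in $H^1$ and are discarded. What survives are exactly the elliptic ($|\gamma_2|<\gamma_1$) and parabolic ($|\gamma_2|=\gamma_1$, a single double zero) solutions, both defined on the whole circle, so the life-span is $2\pi$.

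Part (b) is the genuine obstacle, because there the reduced equation $\phi''+\phi = \tfrac{P}{2}\phi^{-3}$ is nonlinear. My primary route is to invoke the classical solution of the Ermakov--Pinney equation: its general solution is $\phi = \sqrt{a\cos^2\theta + 2b\cos\theta\sin\theta + d\sin^2\theta}$ with $ad-b^2 = P/2$ (the Wronskian of $\cos,\sin$ being $1$). Squaring and applying double-angle identities yields $\psi = \phi^2 = \gamma_1 + \gamma_2\cos(2\theta - 2\theta_0)$, and the constraint $ad-b^2 = P/2$ becomes exactly $\gamma_1^2-\gamma_2^2 = P/2$, which is \eqref{ex:2}. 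Should I wish to avoid quoting this solution, the fallback is a uniqueness argument parallel to the other parts: at the unique critical point $c$ the data are $(\psi(c),\psi'(c))=(m,0)$ with $m\neq 0$, the explicit family \eqref{ex:2} contains a member with these data and the prescribed $P$ (solve $\gamma_1+\gamma_2=m$, $2(\gamma_1^2-\gamma_2^2)=P$), and standard uniqueness for the non-degenerate second-order equation identifies $\psi$ with it. The life-span claim then follows by inspecting zeros of $\gamma_1+\gamma_2\cos 2\theta$: when $|\gamma_1|>|\gamma_2|$ the profile is sign-definite (span $2\pi$), and otherwise its humps have span $\arccos(-\gamma_1/\gamma_2)$ sweeping out $(0,\pi)$ as the ratio varies, so every value between $0$ and $2\pi$ is realized. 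The only delicate point throughout is the nonlinearity in (b); everything else is linear ODE theory plus the $H^1$ bookkeeping at the vanishing points.
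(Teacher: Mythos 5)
Your proof is correct, but it takes a genuinely different route from the paper's. The paper proceeds by uniqueness-matching: for each case it takes the families already exhibited in \eqref{ex:psf}, \eqref{ex:2}, \eqref{ex:half}, solves for the member with the prescribed pressure and data $(\psi,\psi')=(m,0)$ at the critical point of the given solution, and concludes by ODE uniqueness (for (a) it also sketches a second, geometric argument: $\bu\cdot\n\bu=0$ in the sector makes trajectories straight lines, forcing a parallel shear flow). You instead \emph{derive} the families: the substitution $\phi=\psi^{1/\lambda}$ (legitimate on a life-time after fixing the sign of $\psi$) cancels the $(\phi')^2$ terms and turns \eqref{ODE} into
\[
\phi''+\phi=\frac{2(\lambda-1)P}{\lambda^{2}}\,\phi^{\,1-2\lambda},
\]
which is the free oscillator when $\lambda=1$ or $P=0$, a constant-forced oscillator when $\lambda=\tfrac12$, and Ermakov--Pinney when $\lambda=2$; reading off the general solutions reproduces \eqref{sf:psf}, \eqref{ex:half}, \eqref{ex:2}. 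The computation checks out, and it is in essence the paper's conjugacy \eqref{e:conj} stripped of the time rescaling. What your route buys is an explanation of \emph{why} exactly $\lambda=\tfrac12,1,2$ are the explicitly solvable cases (the exponent $1-2\lambda$ degenerates to $0$ or $-3$), dual to the fact that the Bernoulli form \eqref{BODE} is linear at $\lambda=2$ and Ermakov--Pinney at $\lambda=\tfrac12$; what it costs is that case (b) leans on Pinney's classical solution formula, an imported result --- though your stated fallback for (b) (solve $\gamma_1+\gamma_2=m$, $2(\gamma_1^{2}-\gamma_2^{2})=P$, apply uniqueness at the critical point) is verbatim the paper's proof, so nothing is at risk. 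Two minor tightenings: for $P=0$ you cannot invoke \lem{l:structure} for the gluing, since $P=0$ and $\lambda=1$ are precisely the cases excluded there (a solution may then vanish on an open set); instead, note that every arch has span $\pi$, so a global solution consists of at most two arches, i.e.\ \eqref{sf:psf} with $A$ or $B$ possibly $0$, and the $H^1$ property is checked directly from the explicit formula. Conversely, your $H^1$ rejection in (c) of $|\gamma_2|>\gamma_1$, via $(\psi')^{2}\sim|\th-a|^{-1}$ at a simple zero of $\phi$, is finer than the paper's one-line appeal to $H^1$ boundedness.
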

\begin{proof}
If $\l=1$, then the pressure $p=P$ is constant, and it can be chosen $0$. So, the case $\l=1$ is a subcase of $P=0$. Let $P=0$, and let $\psi$ be a solution and let $I$ be a life-time of $\psi$. By symmetry we can assume that $\psi>0$ on $I$ and $0\in I$ is the middle point. So $\psi'(0) = 0$. If $\psi(0) = A$, then we can see that example \eqref{ex:psf} gives another solution with zero pressure and the same initial data at $0$. This implies that the time-span of $I$ is $\pi$, and $\psi$ itself is given by \eqref{ex:psf}. Making the same argument on the complement of $I$ we arrive at the conclusion. Alternatively, we can argue with a direct use of the Euler equation.  Let us consider an interval $I$ where $\psi$ is sign-definite. Since $\psi$ is smooth on $I$, in the sector $\Sigma=\{ 0<r<\infty, \th \in I\}$ the Euler equation can be written classically, $\bu\cdot \n \bu = 0$. This becomes the geodesic equation for particle trajectories. So, in $\Sigma$, $\bu$ is a parallel shear flow. By homogeneity, the only possibility is then given by the stream-function \eqref{sf:psf} up to a rotation and scalar multiple. This forces $I$ to be of length $\pi$. Similar argument applies to the complement of $I$, unless $\psi$ vanishes identically there. In either case, $\Psi$ is given by \eqref{sf:psf}.

Now let $\l=2$ and $\psi$ be a solution corresponding to pressure $P \in \R$ and with a life-time $I$. Without loss of generality, let $I$ be centered at the origin. Denoting $\psi(0) = A$ we can solve the system $A = \g_1+\g_2$, $P/2A = \g_1-\g_2$ to find $\g_1$, $\g_2$. Setting  $\tilde{\psi}$ to be as in \eqref{ex:2} gives another solution with the same pressure and initial data at $0$. Hence, $\tilde{\psi} = \psi$. 

Let us now consider the case $\l = \frac12$. Suppose as before $\psi > 0$ with local maximum at $0$. Define $\g_1 = -4P$, and let $\g_2> - \g_1$ be determined by $\sqrt{\g_1+\g_2} = \psi(0)$. Then the solution given by \eqref{ex:half} is another one with the same initial data at $0$, hence the two coincide. The boundedness in $H^1$ necessitates $\g_1>0$ and $|\g_2| \leq \g_1$. So,  \eqref{ex:half} gives a complete description of solutions in this case. 

\end{proof}

\begin{lemma}[General Structure]\label{l:structure} 
Let $\l>0$, $P\in \R$, and $P\neq 0$, $\l \neq 1$.
\begin{itemize}
\item[(i)] Suppose $\psi\in H^{1}(\T)$ is a weak solution to  \eqref{ODEdist-q}. Then there is a collection of disjoint intervals $I_n$, $n=1,2,...$ such that $|\cup_{n=1}^\infty I_n| = 2\pi$ and $(\rest{\psi}{I_n}, I_n)$ is a local solution for each $n$.
\item[(ii)] Conversely, let $\{(\psi_n,I_n)\}_{n=1}^\infty$ be a collection of local solutions corresponding to the same values of $P$ and $\l$, with $I_n$'s disjoint and $|\cup_{n=1}^\infty I_n| = 2\pi$. Then the globally defined function $\psi = \sum_{n=1}^\infty \psi_n \chi_{I_n}$ belongs to $H^1(\T)$ and is a distributional solution to \eqref{ODEdist-q} on the whole circle $\T$.
\end{itemize}
\end{lemma}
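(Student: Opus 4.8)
The plan is to prove the two directions separately, organizing everything around the zero set of $\psi$. For (i) I decompose $\T$ into the maximal open intervals on which $\psi$ does not vanish, observe that each is a life-time carrying a local solution, and then show that the remaining set is null. For (ii) I glue the given pieces, identify the weak derivative of the glued function, and reassemble the equation interval by interval. The single technical lever used throughout is the rewriting of \eqref{ODEdist-q} in the form $\l(\psi\psi')' = 2(\l-1)P+(2\l-1)(\psi')^2-\l^2\psi^2$, whose right-hand side lies in $L^1(\T)$ whenever $\psi\in H^1$; in particular $\psi\psi'\in W^{1,1}(\T)$, and $\psi\in H^1$ of a one–dimensional domain is continuous.

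\emph{Part (i).} Since $\psi\in C(\T)$, the set $U=\{\th:\psi(\th)\neq0\}$ is open and is a countable disjoint union of open intervals $I_n$. On each $I_n$ the function $\psi$ is sign-definite, hence smooth and a classical solution of \eqref{ODE} by the elliptic bootstrap already recorded, and by maximality $\psi$ vanishes at $\p I_n$; thus $(\rest{\psi}{I_n},I_n)$ is a local solution. (If $U=\T$, i.e.\ $\psi$ never vanishes, then $\psi$ is a single global elliptic solution and there is nothing to prove; note $\psi\equiv0$ is impossible, as it would force $P=0$.) It remains to prove $|Z|=0$ for $Z=\T\bs U$. On $Z$ one has $\psi=0$ and hence $\psi'=0$ a.e., and since $\psi\psi'\in W^{1,1}(\T)$ vanishes on $Z$ its a.e.\ derivative, which coincides with the distributional one, also vanishes a.e.\ on $Z$. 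Evaluating $\l(\psi\psi')' = 2(\l-1)P+(2\l-1)(\psi')^2-\l^2\psi^2$ pointwise a.e.\ on $Z$ then yields $0=2(\l-1)P$. Because $P\neq0$ and $\l\neq1$, this cannot hold on a set of positive measure, so $|Z|=0$ and $\sum_n|I_n|=2\pi$.

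\emph{Part (ii).} Set $\psi=\sum_n\psi_n\chi_{I_n}$, each $\psi_n$ extended by zero. Since each local solution vanishes at the endpoints of its life-time, the boundary contributions in $\int_\T\psi\phi'$ cancel piece by piece, and one checks directly that the weak derivative of $\psi$ is $\sum_n\psi_n'\chi_{I_n}$ with no atomic part at the junctions; this is exactly where the matching of the zeros is used. To see this derivative is square-integrable I integrate the equation over a single life-time, where the boundary term $[\psi\psi']_{\p I_n}$ vanishes (indeed $\psi\psi'\to0$ at every zero for $\l>\tfrac12$, the value $\l=\tfrac12$ being settled by \lem{l:psf}), leaving
\[
(2\l-1)\int_{I_n}(\psi')^2\,d\th=\l^2\int_{I_n}\psi^2\,d\th-2(\l-1)P\,|I_n|.
\]
Summing over $n$ and using $\sum|I_n|=2\pi$ reduces the finiteness of $\|\psi'\|_{L^2(\T)}$ to that of $\|\psi\|_{L^2(\T)}$, so $\psi\in H^1(\T)$ once a uniform amplitude bound is in hand. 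For the distributional equation I then take $\phi\in C^\infty(\T)$, split $\int_\T=\sum_n\int_{I_n}$ (legitimate since $|Z|=0$ and all integrands are $L^1$), integrate the $\psi\psi'\phi'$ term by parts on each $I_n$ (boundary terms again vanish), and substitute the classical equation valid on $I_n$; the pieces reassemble to $\int_\T 2(\l-1)P\,\phi$, which is precisely \eqref{ODEdist-q} tested against $\phi$.

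\emph{Main obstacle.} The only substantial point is that the $I_n$ may accumulate, so I need a \emph{uniform} bound $\sup_n\max_{I_n}|\psi_n|<\infty$ to secure both $\|\psi\|_{L^2}<\infty$ and continuity of $\psi$ at accumulation points of the intervals. I extract this from the local behaviour at a zero, read off from \eqref{ODE}: as $\th$ approaches an endpoint, $\psi\to0$ and $\psi\psi''\to0$, which forces $(\psi')^2\to-2P$ for $\l>1$ (simple zeros, slope $\pm\sqrt{-2P}$, whence $P<0$) and vanishing of order $\l$ for $\l<1$. A short computation with the conserved Bernoulli quantity $B=(2P+\l^2\psi^2+(\psi')^2)\psi^{2/\l-2}$ then shows that the amplitude of an arch scales linearly with its life-span as the latter tends to $0$, so $\max_{I_n}|\psi_n|\to0$ as $|I_n|\to0$. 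Since $\sum|I_n|=2\pi$ permits only finitely many life-spans to stay bounded away from $0$, the amplitudes are uniformly bounded, closing both the $L^2$ estimate and the continuity claim. (When $P>0$ no vanishing solutions exist, the collection reduces to a single elliptic piece, and the statement is immediate.)
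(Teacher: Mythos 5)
Your proposal is correct in substance, and in part (i) it takes a genuinely different route from the paper. The paper proves $|\cup_n I_n| = 2\pi$ globally: it first shows, via the integrated identity \eqref{e:compbyparts} and the pointwise bound \eqref{near}, that $\psi\psi'$ has vanishing one-sided limits at the endpoints of every life-time, then tests \eqref{ODEdist-q} against the constant function $1$, converts each $\int_{I_n}$ into $2(\l-1)P\,|I_n|$ using the classical ODE plus integration by parts, and reads off $4\pi(\l-1)P = 2(\l-1)P\,|\cup_n I_n|$. You instead argue pointwise a.e.\ on the zero set $Z$: since the right-hand side of $\l(\psi\psi')' = 2(\l-1)P + (2\l-1)(\psi')^2 - \l^2\psi^2$ lies in $L^1(\T)$, the identity holds a.e., and a.e.\ on $Z$ both $\psi'$ and $(\psi\psi')'$ vanish (two applications of the standard fact that the derivative of a $W^{1,1}$ function vanishes a.e.\ on its zero set), forcing $2(\l-1)P = 0$ there and hence $|Z|=0$. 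This is a clean, more local argument, and it buys you part (i) without any endpoint analysis; the endpoint vanishing of $\psi\psi'$ is only needed later, in part (ii), to kill boundary terms.

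In part (ii) your skeleton matches the paper's (identify the weak derivative, integrate the ODE over each life-time with vanishing boundary terms, reassemble the distributional identity by summing over intervals), but at the key $H^1$ estimate you diverge, and this is where your argument is materially weaker. The paper applies the Poincar\'e inequality $\int_{I_n}\psi_n^2 \le |I_n|^2 \int_{I_n}(\psi_n')^2$ on each life-time and absorbs the $\psi^2$ term, giving $\int_{I_n}(\psi_n')^2 \le C|I_n|$ with $C=C(P,\l)$ for all small intervals --- no amplitude information is ever needed. You instead reduce the estimate to a uniform amplitude bound, justified by ``amplitude scales linearly with life-span'' via the Bernoulli function. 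That computation does control the $B<0$ arches when $\l>1$: there $(\psi')^2 \le -2P$ along the whole arch, so the amplitude is at most $\sqrt{-2P}\,|I_n|$. But it does not by itself exclude the other branch: arches with $B\ge 0$ have arbitrarily large amplitude as $B\to\infty$, so to run your ``only finitely many long life-spans'' argument you must also know that $B\ge 0$ arches have life-spans bounded below by $\pi/\l$ --- which requires either a Sturm comparison of \eqref{BODE} with the harmonic oscillator or the period analysis of \prop{p:mon}(ii), i.e., machinery from a later section of the paper, whereas the lemma is meant to be self-contained at this stage. Moreover, for $\tfrac12<\l<1$ the endpoint slopes are infinite, so the scaling is not literally linear there (amplitude still tends to $0$ with the life-span, but one sees this through the conjugacy \eqref{e:conj}). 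So your plan can be completed, but as written the amplitude step is a sketch resting on facts not yet established; replacing it with the one-line Poincar\'e absorption closes the argument immediately.
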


We see that the case of zero pressure (or $\l=1$) is the only case when a solution can vanish on an open set. Otherwise, life-times must fill a set of full measure. Moreover local solutions with their life-times serve as building blocks. We can rearrange and piece them together to form new solutions.

\begin{corollary}
Suppose $\psi \in H^1(\T)$ is a solution to \eqref{ODEdist-q}, $|\cup_{n=1}^\infty I_n| = 2\pi$,  and $\psi = \sum_{n=1}^\infty \psi_n \chi_{I_n}$, where $I_n$ is a life-time of $\psi_n$. Then for any spatial rearrangements of intervals $\{J_m\}_m = \{I_n\}_n$, corresponding rearrangement of functions $\{\phi_m\}_m = \{\psi_n\}_n$, and any distribution of signs $\e = \{\e_m = \pm 1\}_m$, the function $\phi = \sum_{m=1}^\infty \e_m \phi_{m} \chi_{J_m}$ is a solution to \eqref{ODEdist-q}.
\end{corollary}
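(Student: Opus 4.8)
The plan is to realize $\phi$ as a gluing of local solutions all sharing the \emph{same} parameters $P$ and $\l$, and then to invoke \lem{l:structure}(ii). The hypotheses already hand us the building blocks: since $\psi \in H^1(\T)$ solves \eqref{ODEdist-q} for fixed $P,\l$ and $I_n$ is a life-time of $\psi_n = \rest{\psi}{I_n}$, each pair $(\psi_n,I_n)$ is a local solution of \eqref{ODEdist-q} for these same $P,\l$. The two operations appearing in the statement — repositioning an interval on the circle and flipping the sign of the corresponding arch — are precisely two of the symmetries of the equation, and the whole point is that neither disturbs $P$ or $\l$. I take it as part of the meaning of ``spatial rearrangement $\{J_m\}_m = \{I_n\}_n$'' that the $J_m$ are disjoint and that $|\cup_m J_m| = |\cup_n I_n| = 2\pi$.

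First I would record that the rotation symmetry (i) carries a local solution $(\psi_n,I_n)$ to a local solution $(\psi_n(\cdot-\th_0), I_n+\th_0)$ with unchanged $P$ and $\l$: the coefficients in \eqref{ODEdist-q} do not depend on $\th$, so a translate of a distributional solution is a distributional solution, while sign-definiteness and the vanishing at the endpoints are clearly preserved, so the translate of a life-time is again a life-time. Thus the matched family $\{(\phi_m,J_m)\}_m$, where $\phi_m$ is $\psi_{n(m)}$ translated onto $J_m$ under the bijection $m\mapsto n(m)$, is again a family of local solutions for the same $P,\l$.

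Next I would check that a sign flip preserves solutions. This is the scaling symmetry (ii) with $\a=-1$: under $\psi \mapsto -\psi$ one has $P \mapsto \a^2 P = P$, so $P$ is untouched, and directly in \eqref{ODEdist-q} every term on the right is quadratic in $\psi$ — namely $(\psi')^2$, $\psi^2$, and $\psi\psi'$ — so the distributional identity is invariant. Consequently each $(\e_m\phi_m, J_m)$ is still a local solution for the same $P,\l$, since multiplying by $\e_m=\pm1$ affects neither sign-definiteness nor the vanishing at the endpoints of $J_m$. Finally, the family $\{(\e_m\phi_m,J_m)\}_m$ consists of local solutions for one common $P$ and $\l$, with the $J_m$ disjoint and $|\cup_m J_m| = 2\pi$, so \lem{l:structure}(ii) assembles them into a single $\phi = \sum_m \e_m \phi_m \chi_{J_m} \in H^1(\T)$ solving \eqref{ODEdist-q} distributionally on all of $\T$, which is the claim.

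As for the main obstacle: there is essentially no analytic difficulty once \lem{l:structure} is in hand — the entire content is the bookkeeping that the two rearrangement operations are symmetries fixing $(P,\l)$. The one place deserving a moment's care is the sign flip, since it is not listed as a standalone symmetry but must be recognized as the $\a=-1$ instance of scaling (equivalently, read off from the quadratic structure of \eqref{ODEdist-q}). I would also emphasize that it is the distributional form \eqref{ODEdist-q}, not the classical \eqref{ODE}, that is being glued, so that the one-sided behavior at the junctions between adjacent $J_m$ — where all arches vanish — is handled automatically by \lem{l:structure}(ii), and no separate continuity or derivative-matching verification is needed.
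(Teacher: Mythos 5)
Your proof is correct and is exactly the argument the paper intends: the corollary is stated as an immediate consequence of \lem{l:structure}(ii), with the rearrangement and sign flips absorbed by the rotation and scaling ($\a=-1$) symmetries of \eqref{ODEdist-q}, which fix $P$ and $\l$. Your additional care in noting that the sign flip is legitimate because every term in \eqref{ODEdist-q} is quadratic in $\psi$, and that the gluing at junctions is handled by the distributional formulation rather than requiring separate matching, is precisely the bookkeeping the paper leaves implicit.
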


\begin{proof}[Proof of \lem{l:structure}]
(i). The set $\{\psi \neq 0\}$ is open, so it is a union of disjoint open life-time interval $\{I_n\}$. To proceed we first examine the end-point behavior of $\psi$ on one interval $I = (a,b)$. We have $\psi \in C^\infty(I)\cap H^1(I)$, and $\psi (a) = \psi(b) = 0$. We have the Newton's formula, $\psi(\th) = \int_a^\th \psi' dy$, and hence the bounds 
\begin{equation}\label{near}
|\psi(\th) | \leq \sqrt{\th-a} \| \psi\|_{H^1},\quad  |\psi(\th) | \leq \sqrt{b-\th} \| \psi\|_{H^1}.
\end{equation}
Let $c \in I$ be fixed and $\e>0$ be small. From integrating \eqref{ODEdist-q} on $[c,b-\e]$ we find
\begin{equation}\label{e:compbyparts}
(b-\e-c) 2(\l-1) P = \int_c^{b-\e}[- (2\l-1)(\psi')^2 + \l^2 \psi^2]d\th + \l \psi \psi'(b-\e) - \l \psi \psi'(c),
\end{equation}
and a similar identity holds near $a$. This implies that the one-sided limits $(\psi \psi')(a^+)$, $(\psi \psi')(b^-)$ exist and are finite. If $(\psi \psi')(a^+) \neq 0$, then in view of \eqref{near}, $|\psi'(\th)| \gtrsim \frac{1}{\sqrt{\th-a}}$, contradicting our assumption $\psi \in H^1$. Similarly, $(\psi \psi')(b^-) = 0$.  Thus, the function $\psi \psi'$ vanishes at the birth and death-times of $\psi$.

Note that $\psi' = 0$ a.e. on the set $\{\psi = 0\} = \T \bs \cup_n I_n$. With this in mind, let us test \eqref{ODEdist-q} against $1$ on $\T$, and use the fact that on each $I_n$ our function solves the classical ODE \eqref{ODE},
\[
\begin{split}
4\pi(\l-1)P &= \sum_n \int_{I_n}[ -(2\l-1) (\psi')^2 + \l^2 \psi^2]\, d\th \\
&= \sum_n \int_{I_n}[ -\l (\psi')^2 - \l \psi'' \psi +2(\l-1)P]\, d\th \\
\intertext{and integrating by parts using that $\psi\psi'$ vanishes at the end-points,}
&
= 2(\l-1)P \left| \cup_{n=1}^\infty I_n \right|.
\end{split}
\]
Hence, $\left| \cup_{n=1}^\infty I_n \right| = 2\pi$, provided $(\l-1)P \neq 0$.

(ii). Let us show that $\psi\in H^1(\T)$ first. In the case $\l = \frac12$ as we know from \lem{l:psf}(c), there is only a single $I_n$ with $|I_n|=2\pi$, which makes the conclusion trivial. Suppose $\l \neq 1/2$.  Let us integrate \eqref{ODE} over one $I_n$. We have 
\[
(2\l-1) \int_{I_n} (\psi'_n)^2 d\th = -2(\l-1)P |I_n| + \l^2 \int_{I_n} (\psi_n)^2\, d\th.
\]
Since $\psi$ vanishes at the end-points of $I_n$, by the Poincare inequality
\[
\int_{I_n} (\psi_n)^2 d\th \leq |I_n|^2 \int_{I_n} (\psi'_n)^2 d\th.
\]
So, $\int_{I_n} (\psi'_n)^2 d\th \leq C |I_n|$,
for some $C>0$ and all $n$. Thus, $\int_\T (\psi')^2 d\th \leq C\sum_n |I_n| <\infty$. This proves that $\psi \in H^1(\T)$.

To show that $\psi$ is a global solution to \eqref{ODEdist-q}, let us fix a test-function $\f$ on $\T$. Due to vanishing of $\psi\psi'$ at the end-points and that $\psi$ is classical on each $I_n$, we obtain
\begin{equation*}\label{}
\begin{split}
\int_{\T}[ - (2\l-1)(\psi')^2 + \l^2 \psi^2 - \l \psi \psi' \p_\th ] \f d\th &=
\sum_n \int_{I_n}[ - (2\l-1)(\psi')^2 + \l^2 \psi^2 - \l \psi \psi' \p_\th ] \f d\th \\
&= 
\sum_n \int_{I_n} [- (\l-1)(\psi')^2 + \l^2 \psi^2 + \l \psi \psi''] \f d\th \\
& =
\sum_n \int_{I_n}  2(\l-1)P \f d\th = \int_\T  2(\l-1)P \f d\th .
\end{split}
\end{equation*}
Thus, $\psi$ is a solution on $\T$.
\end{proof}

We thus have reduced the classification problem to the problem of finding local positive solutions $(\psi,I)$ and determining existence of life-spans that add up to $2\pi$.

\subsection{Bernoulli's law and its direct consequences} Let $(\psi,I)$ be a local solution with $\psi >0$ on $I$.
There is a natural conservation law associated with \eqref{ODE} that can be derived from the conservation of the Bernoulli function on streamlines. Writing the Euler equation in the form 
\begin{equation}\label{eeB}
\bu^\perp \w + \n (2 p+ |\bu|^2) = 0,
\end{equation}
where $\w =  r^{\l-2}( \l^2 \psi + \psi'')$ is the scalar vorticity, we obtain $\bu \cdot \n (2 p+ |\bu|^2) = 0$. Denoting the Bernoulli function on the unit circle $\b = 2P + (\psi')^2 + \l^2 \psi^2$, the above equation in polar coordinates reads
\[
\l \psi \b' = 2(\l-1) \psi' \b.
\]
Integrating we obtain $\b = c \psi^{\frac{2\l - 2}{\l}}$, for some constant $c$. We have found the following  conservation law:
\begin{equation}\label{B}
B = (2P+\l^2 \psi^2+(\psi')^2)\psi^{\frac{2}{\l}-2}.
\end{equation}
It is important to note that $B$ remains constant only on a life-time of $\psi$. If $\psi$ is a global solution, $B$, unlike the pressure $P$, may change from life-time to life-time. Therefore, $B$ serves to parametrize local solutions corresponding the same pressure.  Plugging \eqref{B} into \eqref{ODE} gives the following equation
\begin{equation}\label{BODE}
\frac{\l-1}{\l} \psi^{1-\frac{2}{\l}} B  = \l^2 \psi + \psi''.
\end{equation}
We note that \eqref{BODE} can be alternatively obtained from the vorticity formulation of the Euler equation: $\bu \cdot \n \w =0$. It is clear that example \eqref{ex:ho} corresponds to $B=0$, where \eqref{BODE} becomes a harmonic oscillator. Next we show that signs of $B$ and $P$ determine the type of a solution and exclude some types. Let us observe one simple rule, which follows directly from \eqref{B}
\begin{equation}\label{PtoB}
B<0 \Rightarrow P<0.
\end{equation}

\begin{lemma} \label{l:signs} For $0<\l<1$ a solution is elliptic if and only if $B<0$. For $\l>1$ a solution is elliptic if and only if $P>0$.
\end{lemma}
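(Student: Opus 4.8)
The plan is to prove each equivalence by splitting it into a forward implication (elliptic $\Rightarrow$ sign condition), handled by an extremum principle, and a reverse implication (sign condition $\Rightarrow$ elliptic), handled by analysing the boundary behaviour of $\psi$ at the end of a finite life-time. Recall that \emph{elliptic} means $\psi$ is nowhere zero; after normalizing the sign via the reflection/sign symmetry I may assume $\psi>0$, so ellipticity of a local solution $(\psi,I)$ is simply the statement $I=\T$, in which case $\psi$ is smooth and the Bernoulli constant $B$ of \eqref{B} is globally defined on $\T$. The two cases are driven by one elementary fact: the exponent $2-\frac{2}{\l}$ is negative for $0<\l<1$ and positive for $\l>1$, and correspondingly $\frac{\l-1}{\l}$ changes sign.

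\textbf{Range $0<\l<1$.} For the forward implication I would evaluate \eqref{BODE} at a point $\th_0$ where $\psi$ attains its minimum: there $\psi(\th_0)>0$ and $\psi''(\th_0)\ge0$, so the right-hand side $\l^2\psi+\psi''$ is strictly positive, while the coefficient $\frac{\l-1}{\l}\psi^{1-\frac{2}{\l}}$ multiplying $B$ is strictly negative; hence $B<0$. For the reverse implication I argue by contradiction. If $\psi$ is not elliptic it has, by \lem{l:structure}, a finite life-time $(a,b)$ with $\psi\to0^+$ at $a$ (here $B<0$ forces $P<0\neq0$ through \eqref{PtoB}, so the life-time decomposition applies). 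Writing \eqref{B} as $\b=B\psi^{2-\frac{2}{\l}}$ with $\b=2P+(\psi')^2+\l^2\psi^2\ge 2P$ and letting $\th\to a^+$, the factor $\psi^{2-\frac{2}{\l}}\to+\infty$, so $B<0$ would force $\b\to-\infty$, contradicting the lower bound $\b\ge 2P$. Thus $B<0$ rules out vanishing.

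\textbf{Range $\l>1$.} For the forward implication I would evaluate \eqref{ODE} at a minimum point $\th_0$, where $\psi'(\th_0)=0$, $\psi''(\th_0)\ge 0$ and $\psi(\th_0)>0$; this gives $2(\l-1)P=\psi(\l^2\psi+\l\psi'')>0$, and since $\l>1$ we conclude $P>0$. For the reverse implication I again suppose $\psi$ has a finite life-time $(a,b)$ and examine $\th\to a^+$. Now $2-\frac{2}{\l}>0$, so $\b=B\psi^{2-\frac{2}{\l}}\to0$; combined with $\psi\to0$, the identity $(\psi')^2=\b-2P-\l^2\psi^2$ yields $(\psi')^2\to-2P$, which is non-negative only if $P\le0$. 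Hence $P>0$ precludes vanishing and forces ellipticity.

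The routine steps are the two extremum computations. I expect the main obstacle to be the endpoint analysis underlying the reverse implications: one must be certain that $B$ is a genuine finite constant along the life-time --- which is exactly the content of the Bernoulli law \eqref{B} --- and that passing to the limit $\psi\to0^+$ in $\b=B\psi^{2-\frac{2}{\l}}$ is legitimate. It is precisely the sign of $2-\frac{2}{\l}$ (blow-up versus decay of this factor) that separates the two ranges of $\l$ and dictates which of $B$ or $P$ governs the dichotomy; getting this comparison right, together with the lower bound $\b\ge 2P$, is the crux of the argument.
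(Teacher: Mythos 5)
Your proposal is correct and follows essentially the same approach as the paper: the forward implications are proved by evaluating \eqref{BODE} (for $0<\l<1$) and \eqref{ODE} (for $\l>1$) at an interior minimum, and the reverse implications by tracking the Bernoulli quantity as $\psi\to0^+$ at a life-time endpoint, with the sign of the exponent $2-\frac{2}{\l}$ doing the work. The only cosmetic difference is in the $\l>1$ endpoint analysis, where you read $(\psi')^2\to-2P$ directly from \eqref{B}, while the paper first multiplies \eqref{BODE} by $\psi$ to get $\psi\psi''\to0$ and then passes to the limit in \eqref{ODE} --- the two computations are equivalent.
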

\begin{proof}

Let $\l<1$ and $\psi >0$ be a non-vanishing solution. At its local minimum $\psi'' \geq 0$, $\psi >0$. So, the right hand side of  \eqref{BODE} at this point is positive, hence $B<0$. If, on the other hand, $\psi$ vanishes at $\th =a$, then letting $\th \ra a$ in \eqref{B} we find that, unless $B=0$, $\psi'(\th) \ra \infty$, in which case $B>0$.  Thus, $B\geq 0$.

Now let $\l>1$ and $\psi$ be elliptic. Evaluating \eqref{ODE} at a local minimum reveals $P>0$. If, on the other hand, $\psi$ vanishes, multiplying \eqref{BODE} with $\psi$ and letting $\th \ra a$, where $\psi(a) =0$ we see that $\psi \psi'' (\th) \ra 0$. Then letting $\th \ra a$ in \eqref{ODE} shows that the limit of $(\psi')^2(\th)$ exists and is equal to $-2P$, hence $P \leq 0$. 

\end{proof}

As we see from the proof, in case $\l>1$, all vanishing solutions hit the origin at the same slope up to a sign. Thus, ``gluing" local oppositely signed solutions with the same pressure produces a smooth $C^1$ connection. This point will be elaborated upon later. Next we exclude certain types of solutions with the help of $B$ (here we only discuss cases not classified by \lem{l:psf})

\begin{lemma}\label{l:ellex} For $0<\l < \frac12$ all $H^1$-solutions are elliptic. For $\l>\frac12$ all vanishing solutions belong to $H^1$ automatically.
\end{lemma}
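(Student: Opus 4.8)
The plan is to reduce everything to the endpoint behaviour of a vanishing arch, which is governed entirely by the Bernoulli conservation law \eqref{B}. Fix a local/vanishing solution $(\psi,I)$ with $I=(a,b)$, $\psi>0$ inside and $\psi(a)=\psi(b)=0$. By the arch-symmetry established earlier it suffices to analyse the left half $(a,c)$, where $c$ is the unique critical point; there $\psi$ is a smooth increasing bijection of $(a,c)$ onto $(0,\psi_{\max})$ with $\psi'>0$. Solving \eqref{B} for the derivative gives, on this interval,
\[
(\psi')^2 = B\,\psi^{\frac{2\l-2}{\l}} - 2P - \l^2\psi^2 ,
\]
where $B$ is the constant Bernoulli value on $I$.

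The key computation is to evaluate the Dirichlet energy by changing the integration variable to $\psi$ itself: since $\psi$ is monotone with $\psi'>0$, one has $\int_a^c (\psi')^2\,d\th = \int_0^{\psi_{\max}} \psi'\,d\psi = \int_0^{\psi_{\max}} \sqrt{B\psi^{\frac{2\l-2}{\l}} - 2P - \l^2\psi^2}\;d\psi$, and convergence is decided purely by the integrand as $\psi\to0^+$. When $\l>1$ the exponent $\frac{2\l-2}{\l}$ is positive and the integrand tends to $\sqrt{-2P}$, so the integral converges; when $\l<1$ and $B=0$ the integrand is bounded (this is the harmonic-oscillator case \eqref{ex:ho}); and when $\l<1$, $B>0$ the first term dominates, the integrand behaves like $\sqrt{B}\,\psi^{\frac{\l-1}{\l}}$, and by the integral test $\int_0 \psi^{\frac{\l-1}{\l}}\,d\psi$ converges precisely when $\frac{\l-1}{\l}>-1$, i.e. when $\l>\frac12$. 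The mirror computation applies on $(c,b)$. This at once gives the second assertion: for $\l>\frac12$ the integral converges in every case, so every vanishing arch lies in $H^1(I)$ and is therefore a bona fide local solution; global vanishing solutions assembled from such arches lie in $H^1(\T)$ by \lem{l:structure}(ii).

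For the first assertion let $0<\l<\frac12$ and suppose, for contradiction, that some $H^1$ solution is not elliptic. Since $P\neq0$ and $\l\neq1$, \lem{l:structure}(i) supplies a life-time $(a,b)$ carrying a vanishing arch, and \lem{l:signs} forces $B\ge0$ on it. If $B>0$, then $\frac{\l-1}{\l}<-1$ for $\l<\frac12$, so $\int_0 \psi^{\frac{\l-1}{\l}}\,d\psi=\infty$ and hence $\int_a^c (\psi')^2\,d\th=\infty$, contradicting $\psi\in H^1$. If $B=0$, then \eqref{BODE} reduces to $\psi''+\l^2\psi=0$, whose positive arches have length exactly $\pi/\l>2\pi$; such an arch cannot return to zero within $\T$, so $\psi$ would be sign-definite on all of $\T$, i.e. elliptic, contradicting the assumption. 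Hence no non-elliptic $H^1$ solution exists, and every $H^1$ solution is elliptic.

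The point demanding the most care is the dominant-balance claim underlying the change of variables: I must confirm that $\psi$ is genuinely monotone up to the endpoint (which follows from $c$ being the only critical point on the life-time, so the substitution is legitimate) and that $B\psi^{\frac{2\l-2}{\l}}$ truly controls the integrand as $\psi\to0$ with a genuine two-sided bound, so that the divergence claimed in the first assertion is honest rather than a mere upper estimate. The borderline $B=0$ case must be dispatched by the separate life-span argument rather than by integrability, since there $H^1$ membership actually holds and the real obstruction is the $2\pi$-periodicity requirement.
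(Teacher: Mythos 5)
Your proof is correct, and while it rests on the same two pillars as the paper's argument --- the Bernoulli relation $(\psi')^2 = B\psi^{2-2/\l} - 2P - \l^2\psi^2$ on each arch, and the exclusion of $B=0$ for $\l<\tfrac12$ via the life-span $\pi/\l>2\pi$ of \eqref{ex:ho} --- the mechanism by which you decide integrability is genuinely different. The paper works pointwise in $\th$: for $0<\l<\tfrac12$ it feeds the a priori $H^1$ bound $|\psi(\th)|\lesssim\sqrt{\th-a}$ of \eqref{near} back into the Bernoulli relation to get $(\psi')^2 \gtrsim (\th-a)^{1-1/\l}$, which is non-integrable; for $\l>1$ it uses the finite slope limit $(\psi')^2\to -2P$ at the zeros (from the proof of \lem{l:signs}); and for $\tfrac12<\l<1$ it argues indirectly, showing $\psi\psi'$ is bounded and vanishes at the endpoints and then extracting $\psi'\in L^2$ from the integrated identity \eqref{e:compbyparts}. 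You instead convert the Dirichlet energy into an explicit integral in the $\psi$-variable by the monotone substitution on each half-arch and settle all regimes at once by the integral test at $\psi=0$, the threshold $(\l-1)/\l=-1$ appearing exactly at $\l=\tfrac12$. This buys uniformity (no case split between $\l>1$ and $\tfrac12<\l<1$, no need for \eqref{near} or \eqref{e:compbyparts}) and a slightly stronger divergence statement: every arch with $B>0$ and $\l<\tfrac12$ has infinite energy, the $H^1$ hypothesis entering only to produce the contradiction; what the paper's route buys is economy, since it reuses estimates already established in the proof of \lem{l:structure}. Two small points you should make explicit: the arch-symmetry facts justifying your substitution (unique interior critical point, monotone halves) are proved in the paper for $H^1$ local solutions, but the same ODE-uniqueness argument applies verbatim to classical vanishing arches, which is what your second assertion requires; and in that second assertion, for $\tfrac12<\l<1$ your case list ($B=0$, $B>0$) is exhaustive only because \lem{l:signs} rules out $B<0$ for vanishing arches --- you invoke this in the first part but should record it in the second as well.
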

\begin{proof} First, let $0<\l<\frac12$ and let $(\psi,I)$ be a local vanishing solution. If $B=0$ on $I$, then $\psi$ is given by \eqref{ex:ho} with period exceeding $2\pi$ in the given range of $\l$. So, $B \neq 0$. Then $(\psi')^2 = B\psi^{2- \frac{2}{\l}} - 2P- \l^2 \psi^2$. In view of \eqref{near} and since $\l <1$ we have $(\psi')^2(\th) \gtrsim (\th-a)^{1- \frac{1}{\l}}$. In order to guarantee that $\psi \in H^1(I)$, $\l$ must exceed $\frac12$ strictly, in contradiction with the assumption. 

For $\l>1$,  we already proved that $(\psi')^2 \ra -2P$, and thus finite, at any point where $\psi$ vanishes. So, $\psi \in H^1$. Now, for $\frac12<\l<1$, we have $(\psi'\psi)^2 = B\psi^{4- \frac{2}{\l}} - 2P\psi^2- \l^2 \psi^4$. Thus, the product $\psi'\psi$ remains bounded and in fact vanishes where $\psi$ does. The formula \eqref{e:compbyparts} readily implies that $\psi' \in L^2(I)$. 
\end{proof}

\subsection{Hamiltonian structure, rescaling and conjugacy} According to \lem{l:structure}, the classification of solutions reduces to the question of describing local pairs $(\psi,I)$, where $\psi \in H^1(I)$, and the issue of existence of sequences of life-spans that add up to $2\pi$. So, let $\psi >0$ on $I$ and $I$ is the life-time of $\psi$. Rewriting equation \eqref{BODE} in phase variables $(x,y) = (\psi,\psi')$ gives the system
\begin{equation}\label{sys-B}
\left\{\begin{split}
		x' & = y\\
		y' & = -\lambda^2x +\frac{\lambda-1}\lambda B x^{\frac{\lambda-2}\lambda}.
\end{split}\right.
\end{equation}
Considering $B$ fixed, the system is to be considered on the right half-plane $\{ x\geq 0\}$, and it has a Hamiltonian given by the pressure
\[
 P = - \frac{y^2}{2} - \frac{\l^2}{2} x^2 + \frac{B}{2} x^{\frac{2\l-2}{\l}}.
\]

We now have two available variables $P$ and $B$ to parametrize the family of solutions. The phase portrait of the system changes character depending on the signs of $P$ and $B$ and on location of $\l$ with respect to the pivotal value $\l=1$. We can however rescale the system in several ways to reduce the number of possibilities.
First, all solutions with equal sign of pressure $P$ can be transformed into one with $|P|=1$ by
\begin{equation}\label{scaleP}
\tilde{x} =   \frac{x}{\sqrt{|P|}}, \quad \tilde{y} = \frac{y}{\sqrt{|P|}},\quad \tilde{P} =\sign(P), \quad \tilde{B} = \frac{B}{|P|^{1/\l}}.
\end{equation}
Alternatively, we can rescale the Bernoulli parameter $B$:
\begin{equation}\label{scaleB}
\tilde{x} = \frac{x}{|B|^{\l/2}}, \quad \tilde{y} = \frac{y}{|B|^{\l/2}},\quad \tilde{B} = \sign(B),\quad \tilde{P} = \frac{P}{|B|^\l}.
\end{equation}
These rescalings do not change life-times and spans of solutions. Another important symmetry relates cases $0<\l<1$ and $1<\l$ by a conjugation transformation. Let $\l >0$, and let $(x,y)$ be a solution to \eqref{sys-B} corresponding to a pair $(P,B)$. A routine verification reveals that the new pair $(\tilde{x},\tilde{y})$ given by
\begin{equation}\label{e:conj}
\tilde{x}(t) = {x}^{1/\l}(t/\l), \quad \tilde{y} = \tilde{x}'.
\end{equation}
solves the same system \eqref{sys-B} now with 
\begin{equation}\label{}
\tilde{\l} = \frac{1}{\l}, \quad \tilde{B} = - \frac{2P}{\l^4}, \quad \tilde{P} = - \frac{B}{2\l^4}.
\end{equation}
Denoting by $T_\l(P,B)$ the life-span of $(x,y)$, the spans of conjugate solutions are related by
\begin{equation}\label{}
T_\l(P,B) = \tilde{\l} T_{\tilde{\l}} (\tilde{P}, \tilde{B}).
\end{equation}
Clearly, the conjugacy is the reason behind the apparent duality between the statements of \lem{l:signs} (ii) and (iii), and examples \eqref{ex:2} and \eqref{ex:half}. Although we find it instructive to obtain those directly from the equations.
Using these rescalings and conjugation we can reduce our considerations to the case $B=\pm 1,0$, and $\l>1$, keeping in mind the precautions of \lem{l:signs}. The question now becomes which solutions, if any, have time-spans that add up to $2\pi$, and which type of solutions exist for various values of the pressure.

\section{Description of solutions for $\l>1$} \label{s:desc}
\begin{figure}\label{connection}
        \centering
           \includegraphics[width=4in]{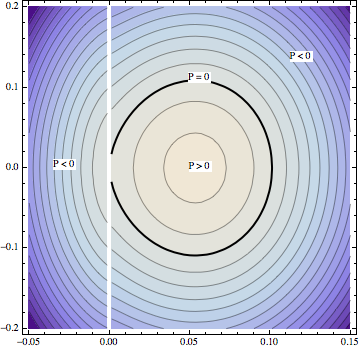}
                \caption{Here $\l = 2.5$. The right half plane shows phase portrait for B=1, the left half shows portrait for $B=-1$ (with sign of $x$ reversed).}
                \label{connection}
\end{figure}
The system \eqref{sys-B} exhibits qualitatively similar phase portraits for all $\l>1$, see Figure~\ref{connection} for $\l = 2.5$. According to \lem{l:signs}, $P>0$ (hence $B>0$) defines the elliptic region inside the separatrix. In that region there is a steady state:
\[
x_s =  \left(  \frac{\l-1}{\l^3} \right)^{\frac{\l}{2}}, \quad y_s = 0,
\]
corresponding to the maximal value of the pressure  $P_{\max} =  \frac{1}{2\l} \left( \frac{\l-1}{\l^3} \right)^{\l-1}$. From that point out the pressure decreases to value $P=0$, which corresponds to the parallel shear flow. The level curve $P=0$ defines the homoclinic separatrix. As  $P$ turns negative,  we observe hyperbolic solutions that split into three kinds: $B<0$, $B>0$, $B=0$. The right hand side of Figure~\ref{connection} depicts region corresponding to $B=1$, while on the left (with sign of $x$ reversed for visual comparison) corresponding to $B=-1$. The solution corresponding to $B=0$ on this portrait ``lives" at infinity, which can be obtained in the limit as $P \ra -\infty$ after rescaling given by \eqref{scaleP}. It is explicitely given by \eqref{ex:ho}. 

In the elliptic region, the problem reduces to simply finding trajectories with period $2\pi$, while in the hyperbolic region we seek solutions whose life-spans can add up to $2\pi$. So, let us examine the end-point behavior of $T_\l(P,1)$. At the steady state, $T_\l = 0$ of course, however as $P \ra P_{\max}$, the periods approach that of the linearized system by the standard perturbation argument. So, in this sense $T_\l(P_{\max}, 1) = \frac{2\pi}{\sqrt{2\l}}$. As $P\ra 0+$, we have $T_\l(0,1) = \pi$ as it is the life-span of the parallel shear flow. In this case more work is necessary to prove convergence as the system \eqref{sys-B} losses regularity near the origin.  On the hyperbolic side, we expect $T_\l(P,1) \ra \pi$ as $P \ra 0-$, and $T_\l(P,1) \ra \frac{\pi}{\l}$ as $P \ra -\infty$ as the latter is the life-span of harmonic oscillator \eqref{ex:ho}. When $B=-1$ we expect $T_\l(P,-1) \ra \frac{\pi}{\l}$ as $P \ra -\infty$ for the same reason, but as $P\ra 0-$ the life-span is expected to vanish since the parallel shear flow produces $B>0$. 

The next question is whether the period or life-spans are monotonic functions in the ranges of $P$ as above. Monotonicity allows us to count exactly how many $2\pi$-periodic solutions exist in elliptic case or in hyperbolic case to gives the exact range of the life-span function. Let us state our main result.

\begin{proposition}\label{p:mon}  
\begin{itemize}
\item[(i)] Elliptic case: let $\l\geq \frac43$. The period-function $T_\l(P,1)$ changes monotonely from $\frac{2\pi}{\sqrt{2\l}}$ to $\pi$, as $P$ decreases from $P_{\max}$ to $0$. 
\item[(ii)] Hyperbolic case: let $\l>1$. As $P$ passes from $0$ to $-\infty$ the life-spans $T_\l(P,1)$ decrease monotonely from $\pi$ to $\frac{\pi}{\l}$, while $T_\l(P,-1)$ increase from $0$ to $\frac{\pi}{\l}$.
\end{itemize}

\end{proposition}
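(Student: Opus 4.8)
The plan is to treat \eqref{sys-B} as a one–degree–of–freedom Newtonian system $\ddot{x} = -V'(x)$ with potential $V(x) = \frac{\l^2}{2}x^2 - \frac{B}{2}x^{\frac{2\l-2}{\l}}$, so that the conserved pressure is $P = -\left(\frac{y^2}{2} + V(x)\right)$ and the life-span of any orbit is the time integral
\[
T = \sqrt{2}\int_{x_-}^{x_+}\frac{dx}{\sqrt{-P - V(x)}},
\]
where $x_\pm$ are the turning points at energy level $-P$. In the elliptic case ($B=1$, $P>0$) the potential is a single nondegenerate well with minimum $V(x_s) = -P_{\max}$, and $x_\pm$ are the two positive roots of $V(x)=-P$; as $P \to 0^+$ the inner root tends to $0$ and the outer to $x_0 = \l^{-\l}$. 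In the hyperbolic case ($P<0$) one turning point is the wall $x=0$, where $V(0)=0$ and the orbit arrives with speed $\sqrt{-2P}$ (in agreement with \lem{l:signs}), and $T = \sqrt{2}\int_0^{x_{\max}}\frac{dx}{\sqrt{-P - V(x)}}$. Using the $C^1$-gluing noted after \lem{l:signs}, the odd reflection across $x=0$ recasts this life-span as half the period of the even-potential oscillator with $V(x)=\frac{\l^2}{2}x^2 - \frac{B}{2}|x|^{\frac{2\l-2}{\l}}$, a double well for $B=1$ and a single well for $B=-1$.

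First I would pin down the four endpoint values. The center limit $P \to P_{\max}$ gives the linearized period $2\pi/\sqrt{V''(x_s)} = \frac{2\pi}{\sqrt{2\l}}$ (one computes $V''(x_s)=2\l$) by the standard perturbation argument. The limit $P \to -\infty$ is handled by the rescaling \eqref{scaleP}, under which $\tilde{B} = B/|P|^{1/\l} \to 0$, so the orbit converges to that of the harmonic oscillator \eqref{ex:ho} of frequency $\l$, whose half-period is $\frac{\pi}{\l}$; this covers both $T_\l(P,1)$ and $T_\l(P,-1)$ as $P\to-\infty$. For $B=-1$ as $P\to 0^-$ one has $x_{\max}\to 0$, and a direct estimate (using $V(x)\sim \frac{1}{2}x^{\frac{2\l-2}{\l}}$ near $0$ and rescaling $x = x_{\max}s$) gives $T \sim x_{\max}^{1/\l} \to 0$. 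The remaining and most delicate endpoints are the separatrix limits $P\to 0^+$ (elliptic) and $P\to 0^-$ ($B=1$ hyperbolic), both of which should equal the life-span $\pi$ of the parallel shear flow of \lem{l:psf}; proving convergence here is genuinely harder because $V$ is not smooth at $x=0$ and the limiting orbit passes through the origin with vanishing speed.

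The core of the argument is monotonicity, for which I would invoke the criteria of Chicone \cite{chicone} and Cima et al \cite{cima}. These reduce monotonicity of the period (or life-span) function to a definite sign of an explicit function built from $V$ — in the cleanest form, the convexity or concavity of $x \mapsto V(x)/(V'(x))^2$ along the branches of the well, compared at equal energy levels. I would compute this quantity for $V(x)=\frac{\l^2}{2}x^2 - \frac{B}{2}x^{\frac{2\l-2}{\l}}$. Near $x=0$ it behaves like $-\frac{2}{\alpha^2}x^{2/\l}$ with $\alpha = \frac{2\l-2}{\l}$, which is concave for $1<\l<2$ and convex for $\l>2$ — matching the fact that the elliptic period decreases toward the separatrix when $\l<2$, is constant at $\l=2$ (the isochronous case \eqref{ex:2}), and increases when $\l>2$. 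The expected outcome of the computation is that the sign stays definite over the whole orbit range precisely when $\l \ge \frac43$ in the elliptic case, and for all $\l>1$ in the hyperbolic case, which yields (i) and (ii).

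The main obstacle is twofold. First, extracting the exact threshold $\l=\frac43$ requires showing that the second derivative of $V/(V')^2$ (equivalently the Cima et al monotonicity function) changes sign somewhere on $(0,x_0)$ when $1<\l<\frac43$ but keeps a constant sign for $\l\ge\frac43$; this is the computation on which the range restriction hinges, and it is where I expect the real work to be. Second, the non-smoothness of $V$ at the origin — a cusp rather than a smooth saddle, since the exponent $\frac{2\l-2}{\l}$ is strictly less than $2$ — must be treated carefully: it is exactly this cusp that makes the separatrix traversed in finite time (so the endpoint value is the finite $\pi$ rather than a logarithmic blow-up), but it also places the problem outside the hypotheses of the smooth period-function theorems near $x=0$. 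Those theorems will therefore have to be applied on compact subintervals bounded away from the origin and supplemented by the endpoint analysis described above.
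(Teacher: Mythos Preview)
Your plan is on the right track for the elliptic part but takes an unnecessarily heavy route in the hyperbolic part, and in both cases the decisive computations are not carried out.

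\textbf{Hyperbolic case.} The paper does not use Chicone/Cima here at all, and your idea of odd reflection into an even potential $\frac{\l^2}{2}x^2 - \frac{B}{2}|x|^{(2\l-2)/\l}$ is overkill (and, as you yourself note, runs into the cusp at $x=0$, which lies outside the hypotheses of the period-function theorems). The paper's argument is elementary: write the half life-span as
\[
\tfrac12 T = \int_0^{x_0}\frac{dx}{\sqrt{\l^2(x_0^2-x^2)\mp(x_0^{2-2/\l}-x^{2-2/\l})}},
\]
substitute $\xi=x/x_0$, and observe that the integrand depends on $x_0$ only through the factor $x_0^{-2/\l}$ multiplying $(1-\xi^{(2\l-2)/\l})$. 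Pointwise monotonicity in $x_0$ is then immediate, and the limits $\pi$, $\pi/\l$, $0$ drop out from the obvious bounds $\frac{1}{\l\sqrt{1-\xi^2}}$ and $\frac{1}{\l\sqrt{\xi^{2-2/\l}-\xi^2}}$ on the integrand. No separatrix analysis or period-function machinery is needed for (ii).

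\textbf{Elliptic case.} Here the paper does use the Chicone/Cima criterion, in the form of comparing $\dfrac{(V')^2-2VV''}{(V')^3}$ (which is exactly $\frac{d}{dx}\big(V/(V')^2\big)$, so your convexity remark is in the right spirit) at the two turning points against its value at the center. But your proposal stops at ``the expected outcome is that the sign stays definite precisely when $\l\ge 4/3$''; this is the whole content of (i), and the paper's execution is substantial. After rescaling the center to $x_s=1$, the criterion reduces to showing a specific polynomial-type expression $W(x)$ has a definite sign on $\big(0,(\l/(\l-1))^{\l/2}\big)$, split into subcases $\l>2$ (where $W\ge 0$, periods increasing) and $4/3\le\l<2$ (where $W\le 0$, periods decreasing). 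Each subcase is handled by multiplying by a suitable power of $x$ (or, after the substitution $\z=x^{-2/\l}$, of $\z$), differentiating two or three times, and checking sign; the threshold $\l=4/3$ emerges precisely from requiring that the terms in the resulting auxiliary function $F(\z)$ be nonnegative. Your sketch gives no indication of how to produce this threshold.

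\textbf{Separatrix limit $T\to\pi$.} You correctly flag this as the delicate endpoint, but the paper's proof is not just ``endpoint analysis'': after the change $\xi=\l x^{1/\l}$ one must split $\int_{\xi_0}^{\xi_1}$ into four pieces, with an intermediate cut at $\xi_0^\a$ for a carefully chosen exponent $\a\in\big(\frac{\l-3/2}{\l+1},1\big)$, and estimate each piece separately using convexity/concavity of $f(\xi)=\xi^{2\l-2}(1-\xi^2)$ near $0$ and $1$. This is a genuine argument, not a routine limit.
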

The elliptic case in the range $1< \l < \frac43$ remains unresolved at this moment, although the proof of the convergence to the limit values of $\frac{2\pi}{\sqrt{2\l}}$ and $\pi$ still applies. We will comment further on this case at the end.

\subsection{Proof in the hyperbolic case}  In this case both signs of $B$ are possible. Let $B = 1$. The level curves $-2P = y^2 +\l^2 x^2 - x^{2-2/\l}$ determine the orbits of the system. Let $x_0$ be the $x$-intercept. We thus have
\[
y = \pm \sqrt{\l^2(x_0^2-x^2) - (x_0^{2-2/\l}- x^{2-2/\l})}.
\]
Integrating over the positive half and changing the variable to $\xi = x/x_0$ we obtain
\begin{equation}\label{T:ell}
\frac12 T = \int_0^{1} \frac{1}{\sqrt{\l^2(1-\xi^2) - x_0^{-2/\l}(1 - \xi^{\frac{2\l-2}{\l}})}} \, d\xi.
\end{equation}
Let $f(x_0,\xi)$ be the integrand. By a direct computation $\p_{x_0} f <0$ pointwise for all $0<\xi<1$. Thus, the life-span function decreases. Furthermore, 
\[
\frac{1}{\l \sqrt{1-\xi^2}}< f(x_0,\xi) < \frac{1}{\l \sqrt{\xi^{2-2/\l} - \xi^2}} ,
\]
with the bounds being achieved in the limits as $x_0 \ra 1/\l^\l$ and $x_0\ra \infty$, respectively. Integrating in $\xi$ recovers the limit values $T \ra \pi$ and $T \ra \frac{\pi}{\l}$ as desired.

If $B = -1$, the argument is similar.  We have in this case
\[
\frac12 T = \int_0^{1} \frac{1}{\sqrt{\l^2 (1-\xi^2) + x_0^{-2/\l}(1 - \xi^{\frac{2\l-2}{\l}})}} \, d\xi, \quad \p_{x_0} f >0,
\]
and
\[
0 < f(x_0,\xi) <  \frac{1}{\l \sqrt{1-\xi^2}},
\]
with the bounds being achieved in the limits as $x_0 \ra 0$ and $x_0\ra \infty$, respectively. Thus, $T \ra 0$ and $T \ra \frac{\pi}{\l}$ as desired.

\subsection{Proof in the elliptic case} The proof in this case is more involved and will be split into several  parts. Let us set $B=1$ and denote $T(P) = T_\l(P,1)$.

\subsubsection{Convergence to $\pi$} For a fixed $P>0$, let $x_0$ and $x_1$ be the two points of intersection of the corresponding orbit with the $x$-axis. We have
\[
y = \pm \sqrt{x^{2-2/\l} - \l^2 x^2 - (x_{0,1}^{2-2/\l} - \l^2 x_{0,1}^2)}.
\]
In view of the symmetry,
\[
\frac12 T(P) = \int_{x_0}^{x_1} \frac{dx}{\sqrt{ x^{2-2/\l} - \l^2 x^2 - (x_{0,1}^{2-2/\l} - \l^2 x_{0,1}^2)}}.
\]
Let us change variable to $\xi = \l x^{1/\l}$, then
\[
\frac12 T(P) = \int_{\xi_0}^{\xi_1} \frac{d\xi}{\sqrt{ 1-\xi^2 - \d \xi^{2-2\l} }},
\]
where $\d = \xi_{0,1}^{2\l-2}(1-\xi_{0,1}^2) \ra 0^+$. Let us fix a small parameter $\e >0$ and an exponent
 \begin{equation}\label{}
\frac{\l - 3/2}{\l+1} < \a < 1,
\end{equation}
and split the integral as follows
\[
\int_{\xi_0}^{\xi_1} = \int_{\xi_0}^{\xi_0^\a} + \int_{\xi_0^\a}^\e + \int_\e^{1-\e} + \int_{1-\e}^{\xi_1} = A +B +C +D.
\]
We will be taking two consecutive limits, first as $\d \ra 0$, and then as $\e \ra 0$. First note, that the middle integral $C$ is proper for small $\d$, thus
\[
C \underset{\d \ra 0}{\to}  \int_\e^{1-\e}\frac{d\xi}{\sqrt{ 1-\xi^2 }} = \sin^{-1}(1-\e) - \sin^{-1}(\e) \underset{\e \ra 0}{\to}  \frac{\pi}{2}.
\]
So, it remains to show that the other integrals converge to zero. We have
\[
\frac{\d}{\xi_0^{2\l-2}} \to 1, \text{ as } \d \ra 0.
\]
Using this we estimate
\[
\begin{split}
B  \leq \int_{\xi_0^\a}^\e  \frac{d\xi}{\sqrt{ 1-\frac{\d}{\xi_0^{\a(2\l-2)}}-\xi^2 }} &=
\sin^{-1} \left( \frac{\e}{\sqrt{1-\frac{\d}{\xi_0^{\a(2\l-2)} }}} \right)- \sin^{-1} \left( \frac{\xi_0^\a}{\sqrt{1-\frac{\d}{\xi_0^{\a(2\l-2)} }}}\right)\\
&\underset{\d \ra 0}{\to}\sin^{-1}(\e) \underset{\e \ra 0}{\to} 0.
\end{split}
\]
Denote $f(\xi) = \xi^{2\l-2}(1-\xi^2)$. Thus, $\d = f(\xi_0) = f(\xi_1)$. In terms of $f$ we have
\[
A = \int_{\xi_0}^{\xi_0^\a} \frac{ \xi^{\l-1} \, d\xi}{\sqrt{ f(\xi) - f(\xi_0)}}.
\]
We consider two cases $\l >3/2$ and $\l \leq 3/2$. In the case $\l >3/2$, function $f$ is convex and increasing near the origin. Thus, we have inequality
\[
f(\xi) - f(\xi_0) \geq f'(\xi_0)(\xi-\xi_0) \sim \xi_0^{2\l-3}(\xi-\xi_0).
\]
Substituting into the integral, we obtain
\[
A \lesssim \frac{\xi_0^{\a(\l-1)}}{\xi_0^{\l - 3/2}} \int_{\xi_0}^{\xi_0^\a} \frac{d\xi}{\sqrt{\xi-\xi_0}} 
\sim \frac{\xi_0^{\a(\l-1)}\xi_0^{\a/2}}{\xi_0^{\l - 3/2}} = \xi_0^{\a(\l+1) + \frac{3}{2} - \l} \ra 0,
\]
in view of our condition on $\a$. If $\l \leq 3/2$, then $f$ is concave and still increasing in the vicinity of the origin, and hence,
\[
f(\xi) - f(\xi_0) \geq f'(\xi_0^\a)(\xi-\xi_0) \sim \xi_0^{\a(2\l-3)}(\xi - \xi_0).
\]
Substituting, we similarly obtain
\[
A \lesssim \frac{\xi_0^{\a(\l-1)}}{\xi_0^{\a(\l - 3/2)}} = \xi_0^\a \ra 0.
\] 
For the remaining integral $D$, we notice that in the vicinity of $1$ we have
\[
|f(\xi) - f(\xi_1)| \sim |f'(\xi_1)||\xi-\xi_1| \sim |\xi-\xi_1|.
\]
Thus, 
\[
D \sim \int_{1-\e}^{\xi_1} \frac{d\xi}{\sqrt{\xi_1-\xi}} \sim \sqrt{\xi_1 - 1+\e} \ra \sqrt{\e} \ra 0.
\]
This completes the proof.

\subsubsection{Monotonicity of the period: setup} We now return to the original Hamiltonian system \eqref{sys-B} with $B=1$ as before. Let us recall the sufficient condition of monotonicity given in \cite{chicone,cima}. Suppose $H(x,y)$ is a Hamiltonian of a 2D system of ODEs with $(x_0,0)$ being a non-degenerate minimum, $H= 0$. Let us suppose $H = \frac{y^2}{2}+ V(x) $. The period-function $T= T(h)$ as a function of level sets $H = h$ is increasing if 
\begin{equation}\label{e:V}
\frac{(V'(x^+_h))^2 - 2V(x^+_h)V''(x^+_h) }{(V'(x^+_h))^3} > \frac{(V'(x^-_h))^2 - 2V(x^-_h)V''(x^-_h) }{(V'(x^-_h))^3},
\end{equation}
where $x^+_h>x_h^-$ are the $x$-intercepts of the orbit $H = h$. A similar condition, with the reversed inequality sign, implies $T$ is decreasing. Since normally, and certainly in our case, it is difficult to solve for $x^\pm_h$, we simplify the above condition by comparing all the values to the left and to the right with the value at the center. Thus, if for all $x>x_s$,
\begin{equation}\label{e:Fright}
\frac{(V'(x))^2 - 2V(x)V''(x) }{(V'(x))^3} > \lim_{x'\ra x_s}\frac{(V'(x'))^2 - 2V(x')V''(x') }{(V'(x'))^3} ,
\end{equation}
and for all $x <x_s$,
\begin{equation}\label{e:Fleft}
\frac{(V'(x))^2 - 2V(x)V''(x) }{(V'(x))^3} < \lim_{x'\ra x_s}\frac{(V'(x'))^2 - 2V(x')V''(x') }{(V'(x'))^3} ,
\end{equation}
then the period-function is increasing. Reversing the inequalities above gives a criterion for decreasing periods. 

To make subsequent computation easier let us scale the equilibrium of the system to $(1,0)$. We thus pass to a new couple of variables
\[
x \ra x/x_s, \quad y \ra  y/ x_s.
\]
The new system reads as follows
\begin{equation}\label{sys-xi-new}
\left\{\begin{split}
		x' & = y\\
		y' & = \lambda^2( -x + x^{\frac{\lambda-2}\lambda})
\end{split}\right. \ or\ 
\left\{\begin{split}
		x' & = -y\\
		y' & = \lambda^2( x - x^{\frac{\lambda-2}\lambda})
\end{split}\right.
\end{equation}
with the Hamiltonian given by
\begin{equation}\label{}
\begin{split}
H(x,y) &= \frac12 y^2 + \frac{\l^2}{2} V(x), \\
V(x) & = x^2 - \frac{\l}{\l-1} x^{2- 2/\l} + \frac{1}{\l-1} .
\end{split}
\end{equation}
In the new setup we are looking for monotonicity in the range $x \in \left( 0, \left( \frac{\l}{\l-1} \right)^{\l/2} \right)$, with $x_s = 1$ being the center. Computing the expressions involved in \eqref{e:Fright}, \eqref{e:Fleft}  we obtain
\[
\frac{(V'(x))^2 - 2V(x)V''(x) }{(V'(x))^3}  = 
\frac{-\frac{\l-2}{\l} x^{2-2/\l}+ x^{2-4/\l}-1 + \frac{\l-2}{\l} x^{-2/\l}}{2(\l-1)x^3(1-x^{-2/\l})^3},
\]
while
\[
\lim_{x\ra 1} \frac{(V'(x))^2 - 2V(x)V''(x) }{(V'(x))^3} = -\frac{(\l-2)}{12}.
\]

\subsubsection{Monotonicity of the period: subcase $\l>2$} We aim at showing that in this range the period function is increasing. To this end, we will verify \eqref{e:Fright}, \eqref{e:Fleft} for $V$ (clearly, a positive multiple of $V$ does not change the relations \eqref{e:Fright}, \eqref{e:Fleft} ). From the formulas above, \eqref{e:Fright} and \eqref{e:Fleft} are equivalent to one statement, namely, 
\begin{equation}\label{W}
\begin{split}
W(x) &= -\frac{\l-2}{\l} x^{2-2/\l} + x^{2-4/\l} -1 + \frac{\l-2}{\l} x^{-2/\l} \\
&+ \frac{(\l-1)(\l-2)}{6} x^3 (1- x^{-2/\l})^3 \geq 0,
\end{split}
\end{equation}
throughout  the interval $x  \in \left( 0, \left( \frac{\l}{\l-1} \right)^{\l/2} \right)$. 

In the range  $x >1$, to show \eqref{W} we notice that $W(1) = W'(1) = 0$, and 
\begin{multline*}
\frac{\l}{\l-2} \left( x^{1+2/\l} W'(x)  \right)' = \frac{4(\l-1)}{\l} x(x^{1-2/\l}-1)(1-x^{-2/\l}) +\\
+ \frac{(3\l+2)(\l-1)}{2} x^{2+2/\l} (1 - x^{-2/\l})^3+ 6(\l-1) x^2(1-x^{-2/\l})^2,
\end{multline*}
which is positive for $x >1$. Thus, $ x^{1+2/\l} W'(x)$ is increasing from $0$ to the right of $x=1$. Consequently, $W'(x)$ is positive for $x > 1$, and hence, so is $W$. 

For the range  $x \in (0,1)$, let us change
variables to $\z = x^{-2/\l}$. We have
\[
W(\z) = -\frac{\l-2}{\l} \z^{-\l+1} + \z^{-\l+2}-1 + \frac{\l-2}{\l} \z + \frac{(\l-1)(\l-2)}{6} \z^{-3\l/2} (1-\z)^3
\]
considered in the range $\z\geq 1$. By direct computation, we observe that 
both $\z^{3\l/2+2} W''(\z)$ and $(\z^{3\l/2+2} W''(\z))'$ vanish at $\z=1$. Thus, if we can show that $(\z^{3\l/2+2} W''(\z))'' \geq 0$ for all $\z\geq 1$, then it would imply that $\z^{3\l/2+2} W''(\z) \geq 0$, and in particular, $W'' \geq 0$, which gives $W \geq 0$ as before. 

So, computing the second derivative we have
\begin{multline*}
\frac{4}{(\l-1)(\l-2)} (\z^{3\l/2+2} W''(\z))'' = (\l+2)(\l+4) \z^{\l/2-1} (\z-1) \\
+ 4(\l+2) \z^{\l/2-1}+ 3(3\l -4)(\l-2) (1-\z) + 4(3\l-4):= F(\z).
\end{multline*}
We have 
\begin{equation}\label{}
\begin{split}
F'(\z) & = \frac{\l(\l+2)(\l+4)}{2} \z^{\l/2-1} - \frac{\l(\l+2)(\l-2)}{2}\z^{\l/2-2} - 3(3\l-4)(\l-2) \\
F''(\z) & = \frac{\l(\l+2)(\l-2)}{4}\z^{\l/2-3} \left[ (\l+4) \z - (\l-4) \right]\\
F'''(\z) & = \frac{\l(\l+2)(\l-2)(\l-4)}{8}\z^{\l/2-4} \left[ (\l+4) \z - (\l-6) \right].
\end{split}
\end{equation}
We see that $F(1) = 8(2\l-1) >0$, $F'(1) = -6(\l^2-6\l+4)$, and both $F''(\z), F'''(\z) > 0$ for all $\z\geq 1$. If $2< \l \leq 3+\sqrt{5}$, then $F'(1) \geq 0$, and thus, this case is settled by the Taylor expansion. If $\l > 3+\sqrt{5}$, then for $t = \z-1$ again by the Taylor expansion, we have the bound
\[
F(t) \geq 16\l-8 -6(\l^2-6\l+4) t + \l(\l^2-4) t^2.
\]
The discriminant of the quadratic on the right hand side is given by (up to a positive multiple)
\[
D = - 7\l^4 - 100\l^3+460\l^2-464\l+144,
\]
which is strictly negative for all $\l > 3+\sqrt{5}$. This settles the whole range of $\l >2$.

\subsubsection{Monotonicity of the period: subcase $\frac43 \leq \l <2$} We show that in this case the periods are monotonically decreasing. This amounts to proving the opposite inequality $W(x) \leq 0$ on $x  \in \left( 0, \left( \frac{\l}{\l-1} \right)^{\l/2} \right)$. For $x >1$, the term $(1-x^{-2/\l})$ is positive, and $x^2>x$. So,
\[
\frac{\l}{\l-2} \left( x^{1+2/\l} W'(x)  \right)' \geq x^2 (1-x^{-2/\l})^2 \left[ 6-\frac{4}{\l} + \frac{3\l+2}{2} x^{2/\l}(1-x^{-2/\l}) \right] \geq 0.
\]
This implies that $\left( x^{1+2/\l} W'(x)  \right)' \leq 0$, and hence $W\leq 0$.

In the case $x \leq 1$ we appeal to the previous computation. It is straightforward to see that each term in $F(\z)$ is non-negative as long as $\l \geq 4/3$. Hence $W''(\z) \leq 0$, and hence $W\leq 0$.

\section{Classification summary and further discussion}\label{s:summary}

Now we have all the tools to classify homogeneous solutions. We will only focus on the non-trivial case of $\l \neq 1$ as in this case all solutions are parallel shear flows focus. We derive the conjugate case of $0<\l<1$ by using transformation formula \eqref{e:conj}. Let us start with the elliptic case.

\subsection{Elliptic solutions}
In the range $1<\l$, note that monotonicity character of $T$ changes in the elliptic region depending on whether $\l>2$ or $\l<2$. At $\l = 2$ the solutions are described explicitly by \eqref{ex:2}. All the periods $T$ are $\pi$, yielding a non-trivial homogeneous solution for each $0<P<P_{\max}$ and of course the trivial pure rotational flow for $P = P_{\max}$. For other $\l$'s the situation is quite different. In order for $\psi$ to yield a field in $\R^2$, it has to be $2\pi$-periodic, which means that $T = \frac{2\pi}{n}$ for some $n\in \N$. In  view of \prop{p:mon} (i) there are no such solutions for any $\frac43<\l<2$ (except for the trivial rotational $P = P_{\max}$) since there is no integer $n$ satisfying $\pi< \frac{2\pi}{n} < \frac{2\pi}{\sqrt{2\l}}$. For $\l>2$, the number of $2\pi$-periodic solutions is equal to the number of integers $n\in \N$ so that $\frac{2\pi}{\sqrt{2\l}} < \frac{2\pi}{n} < \pi$. This means $n$ has to satisfy $4<n^2<2\l$. The first $n$ to satisfy $n^2>4$ is $3$, and so starting only after $\l > \frac{9}{2}$ the periodic solutions start to emerge. The number is given by the cardinality of the set $(2,\sqrt{2\l}) \cap \N$, which grows roughly like $\sqrt{2\l}$. 

In the range $0< \l < \frac34$ we argue by conjugacy. In this case we can rescale $B$ to $-1$ as the elliptic case corresponds to $B<0$ only. Then the range for the pressure is $P_{\min} = - \frac{1}{2\l} \left( \frac{1-\l}{\l^3} \right)^{\l-1} \leq P <0$, with $P = P_{\min}$ corresponding to the trivial rotation. Since $T_\l = \frac{1}{\l} T_{1/\l}$ under \eqref{e:conj} we argue that $\frac{\pi}{\l} <T_\l< \frac{2\pi}{\sqrt{2\l}}$ for $\frac12 < \l \leq \frac34$; $T_\l \equiv 2\pi$ for $\l = 
\frac12$ and all solutions are given explicitly by \eqref{ex:half}; and  $ \frac{2\pi}{\sqrt{2\l}} <T_\l<\frac{\pi}{\l}$ for $0 < \l < \frac12$. We can readily see that there is no fit for a $T$ of the form $ \frac{2\pi}{n}$ in either of the non-trivial cases. We summarize our findings in the table below.

\begin{table}[!h]
\centering
\begin{tabular}{|p{.6cm}<{\centering}|p{2.5cm}<{\centering}|p{2cm}<{\centering}|p{2.75cm}<{\centering}|p{2cm}<{\centering}|p{3cm}<{\centering}|}
\hline	
	\multirow{2}{*}{$B$}&\multirow{2}{*}{$P$}&\multicolumn{4}{c|}{$\lambda$}\\
\cline{3-6}
&&$(1,\frac43)$&$[\frac43,2)\cup(2,\frac92]$&$2$&$(\frac92,\infty)$\\
\hline
\multirow{2}{*}{1}&$P=P_{\max}$&\multicolumn{4}{c|}{rot.${^\dag}$}\\
\cline{2-6}
&$0<P<P_{\max}$&?&no$^{*}$&all $^{\diamond}$&$\#\{(2,\sqrt{2\lambda})\cap\mathbb{N}\}$\\
\hline
\end{tabular}
\begin{tabular}{|p{.6cm}<{\centering}|p{2.5cm}<{\centering}|p{3.32cm}<{\centering}|p{3.4cm}<{\centering}|p{3.4cm}<{\centering}|}
\hline
&&$(\frac34,1)$&$\frac12$&$(0,\frac12)\cup(\frac12,\frac34]$\\
\hline
\multirow{2}{*}{-1}&$P=P_{\min}$&\multicolumn{3}{c|}{rot.}\\
\cline{2-5}
&$P_{\min}<P	<0$&?&all&no\\
\hline
		\end{tabular}
\begin{flushleft}
	\footnotesize{${^\dag}$ ``rot." is short for the rotational flow described by example \eqref{ex:rot}} \\
	\footnotesize{$^\diamond$} ``all" means that all solutions are $2\pi$ periodic\\
	\footnotesize{$^*$ ``no" means there are no solutions}
\end{flushleft}
\bigskip
	\caption{\small{The number of elliptic periodic solution corresponding to all possible values of $B$, $P$ and $\lambda$ (after rescaling). Here, $P_{\max} =  \frac{1}{2\l} \left( \frac{\l-1}{\l^3} \right)^{\l-1}$, $P_{\min} = - \frac{1}{2\l} \left( \frac{1-\l}{\l^3} \right)^{\l-1}$}.}\label{table-elliptic}
\end{table}

\subsection{Hyperbolic and parabolic solutions}
Let us first discuss proper hyperbolic solutions. Since hyperbolic solutions may consist of different local solutions with the same pressure it makes sense to scale the pressure to a fixed value $P = \pm 1$ and indicate which life-spans are available for each sign of $B$. We thus list cases of $B<0$, $B>0$ and $B = 0$ based on \prop{p:mon} and conjugacy relation \eqref{e:conj}. According to \lem{l:ellex} and example \eqref{ex:half} the range $0<\l \leq \frac12$ enjoys no hyperbolic solutions. For $\l >\frac12$ any local solution is $H^1$, and stitching them together produces a globally $H^1$-solutions according to \lem{l:structure}. Let us focus on the case $\frac12 <\l <1$, as $\l>1$ is elaborated already in \prop{p:mon}. For $P = 1$, by conjugacy, $T_\l = \frac{1}{\l} T_{1/\l}$ as a function of $B >0$ varies between $0$ and $\pi$. For $P = -1$, this corresponds to $\tilde{B}>0$ which puts $T$ in the range $\pi < T< \frac{\pi}{\l}$. This makes it impossible to fit two local solutions with negative $P$ on the period of $2 \pi$. Thus no hyperbolic solutions exist in that range. In all other cases stitching is possible to produce a variety of solutions, and all solutions come in that form according to \lem{l:structure}. We summarize our findings in the table below.

\begin{table}[h]
	\begin{minipage}{.4\linewidth}
	\centering
        \begin{tabular}{|c|c|}
\hline
		$B$&$\lambda>1$\\
\hline
		$B>0$&$\frac\pi\lambda<T<\pi$\\
\hline
		$B=0$&$T=\frac\pi\lambda$\\
\hline
		$B<0$&$0<T<\frac\pi\lambda$\\
\hline
	\end{tabular}
	\vspace{2pt}
\subcaption{$P=-1$}
	\end{minipage}	
      \begin{minipage}{.4\linewidth}
	\centering
	  \begin{tabular}{|c|c|c|}
\hline
	$P$&$\frac12<\lambda<1$&$0<\lambda\leq\frac12$\\
\hline
	$P=1$&$0<T<\lambda$&no\\
\hline
	$P=-1$&no&no\\
\hline
        \end{tabular}
        \vspace{10pt}
        \subcaption{$B>0$}
	\end{minipage}
   	\caption{The ranges of the life-span $T$ as a function of $B$ for hyperbolic solutions.}\label{table-hyperbolic}
\end{table}

Let us recall that for $\l>1$ the hyperbolic solutions have the same slopes up to a sign at the points where they vanish. Thus, a $C^1$-smooth stitching is possible in this case. Stitching with slopes of opposite signs produces a vortex sheet along the ray where the stitching occurred ($\bu$ has a jump discontinuity in the tangential direction to the ray). In the range $\frac12 < \l <1$ the slopes are infinite at the end-points as seen from the proof of \lem{l:ellex}.

As to proper parabolic solutions, ones that vanish at one point only, we already excluded them in the range $\l>1$. In the range $0<\l< \frac12$ $H^1$-solutions aren't allowed to vanish by \lem{l:ellex}, and in the remaining range $\frac12 \leq \l<1$ we see the only unaccounted case is that of $B=0$ which yields a solution with period $\frac{\pi}{\l}$ giving $2\pi$ only when $\l = \frac12$. So, the parabolic solution we exhibited in \eqref{ex:half} is the only one that exist.

\subsection{Remark about the Onsager conjecture} One of our original motivations to study homogeneous solutions is to consider the particular case of $q = - \frac13$, or $\l = \frac23$, which yields a vector field $\bu$ locally in the Besov class $B^{1/3}_{3,\infty}$, i.e. Holder continuous in the $L^3$-sense. This space is known to be critical for a weak solution to the Euler equation to conserve energy in the sense that any smoother class would conserve energy and where are fields in that class that have anomalous energy class (see \cite{ds,isett,shv-lectures} for many references and detailed introduction in the problem). The ``hard" part of the Onsager conjecture is to show that there is an actual solution $\bu \in L^3_t (B^{1/3}_{3,\infty})_x$ that does not conserve energy. For the stationary case, such as ours, we aim at finding solutions with smooth or zero force for which the energy flux $\Pi$ is non-vanishing. By the latter we understand the value  resulting from testing the nonlinear term with a mollified field $\bu_\e$ and sending $\e \ra 0$ (this would certainly vanish for smooth fields). In our case, in view of \lem{l:ODEdist-q}, the force is zero, so \eqref{pq} is satisfied in $\R^2$. As shown in \cite{shv-lectures} the flux in this case is given by $\Pi = \int_0^{2\pi} (\psi'(\th))^3 d\th$ and is shown to vanish for any solution, by a direct manipulation with the equation \eqref{ODE}. Now that we can classify completely solutions for $\l = \frac23$ the reason for that becomes much more transparent. Since all solutions are either hyperbolic or constant, the local hyperbolic pieces are even with respect to their centers. So, $\psi'$ is odd on every local piece, and hence the integral for $\Pi$ is clearly zero. Recall that the underlying reason for the evenness of $\psi$ is the Hamiltonian structure of the ODE, inherited from the Hamiltonian structure of the original Euler equation. This factor has never been taken into account in previous studies of Onsager-critical solutions. For instance, the classical vortex sheets are critical too, but the energy conservation traces down to the incompressibility of $\bu$ (particles are not allowed to go across the sheet, see \cite{shv-lectures}). It would be interesting to get a deeper understanding of the Hamiltonian symmetries of the Euler equation in relation to the Onsager conjecture.


\end{document}